\newtheorem{theorem}{Theorem}[section]
\newtheorem{lemma}[theorem]{Lemma}
\newtheorem{proposition}[theorem]{Proposition}
\newtheorem{conjecture}{Conjecture}[section]
\newcommand{\Po}{{P}}
\newcommand{\maxrts}{\max_\mathcal{R}\text{TS}}
\newcommand{\minrst}{\min_\mathcal{R}\text{ST}}
\newcommand{\maxrsts}{\max_{\mathcal{R}^*}\text{TS}}
\newcommand{\minrsst}{\min_{\mathcal{R}^*}\text{ST}}
\title{Two-count interval representation of a permutation}
\author{Csaba Bir\'o}
\address{Department of Mathematics, University of Louisville, Louisville, Kentucky, U.S.A.}
\email{csaba.biro@louisville.edu}
\author{Andr\'{e} E. K\'{e}zdy}
\address{Department of Mathematics, University of Louisville, Louisville, Kentucky, U.S.A.}
\email{kezdy@louisville.edu}
\author{Jen\H{o} Lehel}
\address{Alfr\'ed R\'enyi Institute of Mathematics, Budapest, Hungary} 
\email{lehelj@renyi.hu}
\date{\today}
\begin{document}

%\maketitle
\begin{abstract}The interval count problem, a classical question in the study of interval orders, was introduced by Ronald Graham in the 1980s.
This problem asks: given an interval order $P$, what is the minimum number of distinct interval lengths
required to construct an interval representation of $P$?  Interval orders that can be represented with just one
interval length are known as semiorders, and their characterizations are well known.
However, the characterization of interval orders that require at
most $k$ interval lengths — termed $k$-count interval orders—remains an open and challenging problem for $k\geq 2$.

Our investigation into $2$-count interval orders led us naturally to consider a related problem,
interval representations of permutations, which we introduce in this paper.
Specifically, we characterize permutations that have a $2$-count interval representation.
We prove that a permutation admits a $2$-count interval representation if and only if its
longest decreasing subsequences have length at most $2$.
For larger values of $k$, however, a similar characterization does not hold.  
There are permutations that do not permit a $3$-count interval representation
despite having decreasing subsequences of length at most $3$.  
Characterizing $k$-count permutations remains open for $k \geq 3$.

The $k$-count permutation representation problem appears to capture essential aspects
of the broader problem of characterizing $k$-count interval orders. 
To support this connection, we apply our findings on interval representations of permutations
to demonstrate that a height-$3$ interval order is $2$-count if and only if it has depth at most $2$,
where the depth of an interval order refers to the length of the longest nested
chain of intervals required in any interval representation of the order.
\end{abstract}
\keywords{permutation, interval representation, 2-count representation, height-3 interval order}
\subjclass[2020]{06A06, 90C27, 05C62, 52B05, 05C20}

\maketitle

\section{Introduction} 
A finite poset $P=(X, \prec)$ is an interval order if it has an interval representation, a 
family of compact intervals $\{I_x : x \in X\}$ in $\mathbb{R}$ such that $x \prec y$ if and only
if $I_x$ is entirely to the left of $I_y$, for all $x, y \in X$.  An interval representation of $P$ using $k$ distinct interval lengths is called a {\it $k$-count representation} of $P$. 
The {\it depth} of an interval order is  the maximum number of intervals in a chain of nested intervals need to occur in every representation of the interval order.
 
A systematic study of $k$-count representations for interval graphs and interval orders was initiated by Fishburn 
\cite{FishburnBook} a half a century ago. 
Fishburn proved \cite[Section 9, Theorem 1]{FishburnBook} that for arbitrary $k$, there are depth-2 interval orders with no $k$-count interval representation. He found the smallest $2$-count interval order with no $2$-count representation that has $10$ elements  
\cite[Section 9, Example 2]{FishburnBook}. 

The characterization of interval orders having a $k$-count representation and the complexity status of the corresponding recognition problems are still open for $k\geq 2$  in general. There are results for $k=2$ with additional restrictions \cite{BoyadzhiyskaDissertation,BGT,FMOS,JLORS}.
Here we discuss another particular case for $k=2$. Our main result is  Theorem \ref{2count}, where we give a characterization of height-3 posets having a 2-count interval representation.  
 This theorem supports our conjecture stated in Section \ref{main} on the forbidden structure characterization of a
{$\mathbf{4}+\mathbf{1}$}-free 
interval order to be 2-count ({$\mathbf{4}+\mathbf{1}$} is a 5-element poset, the union of a chain on four elements and a distinct incomparable element).

The heart of the proof of the main theorem is an application of Theorem \ref{2countperm} on the $2$-count interval representations of permutations. Section \ref{prelim} introduces the $k$-count problems for permutations. The proof of Theorem \ref{2countperm},  a procedure that solves a corresponding linear system of inequalities, is given in Section \ref{mutations}.

By the classical theorem of Scott and Suppes \cite{ScottSuppes}, an interval order has 1-count interval representation if and only if it contains no {$\mathbf{3}+\mathbf{1}$} subposet; equivalently if it is a semiorder ({$\mathbf{3}+\mathbf{1}$} is a 4-element poset, the union of a chain on three elements and a distinct incomparable element). 

A necessary condition for the existence of a $2$-count representation is that
the interval order has depth at most two, that is, there is no nested chain of more than two intervals, which occurs in every representation of the interval order (see  Proposition \ref{nested}). Notice that forbidding a {$\mathbf{4}+\mathbf{1}$} subposet from a depth-2  interval order is still not sufficient to guarantee a $2$-count interval representation.
For convenience, we identify an interval order with its ascent sequence (see Bousquet-Mélou et al. \cite{ascent2010}).  So, to introduce the interval order $P$ corresponding to the ascent sequence $(0,1,2,0,3,2,3,0,2,4,2)$ we write
$P=(0,1,2,0,3,2,3,0,2,4,2)$. The Hasse diagram and canonical representation of this interval order are presented  in Figure \ref{Andre11}. 
%This interval order and the examples in the paper are identified using their ascent sequence code (c.f. \cite{ascent2010}). 
Computer search showed that $P=(0,1,2,0,3,2,3,0,2,4,2)$ is minimal in the sense that no depth-2, {$\mathbf{4}+\mathbf{1}$}-free interval order exists with less than 11 elements that has no 2-count representation. 

%%%%%%%%%%%%%%%%%%%%%%%%%
\section{Preliminaries}%%%%%%%%%%%%%%%%%%%%%%%
\label{prelim}

\subsection{Interval orders}
\label{subintord}
The magnitude $m$ of an interval order $P$ is defined as the minimum number of endpoints of an interval representation of $P$. Greenough \cite{Greenough} proved that the minimal endpoint representation is essentially unique. The minimal endpoint representation of $P$ with  endpoints $0,1,\ldots,m-1$ is called the {\it canonical representation} of $P$. Greenough \cite{Greenough} also proved that 
 in the canonical representation every point is the left endpoint and the right endpoint of some interval. In particular, in a canonical representation  the singleton intervals  $[0,0]$ and $[m-1,m-1]$ are present. 

\tikzstyle{A} = [circle, draw=black!, fill=black,minimum width=.7pt, inner sep=.7pt]
\tikzstyle{X} = [circle,draw=black!,minimum width=2pt, inner sep=2pt]
 \tikzstyle{Xp} = [circle,draw=black!,minimum width=1pt, inner sep=1pt]
  \tikzstyle{Xr} = [circle, fill=red!30,draw=black!,minimum width=2pt, inner sep=1pt]
    \tikzstyle{Xred} = [circle, fill=red!40,draw=black!,minimum width=2pt, inner sep=2pt]
    \tikzstyle{Xg} = [circle, fill=cyan!40,draw=black!,minimum width=2pt, inner sep=1pt] 
        \tikzstyle{Xgreen} = [circle, fill=green!60,draw=black!,minimum width=2pt, inner sep=2pt] 
        \tikzstyle{Xb} = [circle, fill=violet!30,draw=black!,minimum width=2pt, inner sep=1pt] 
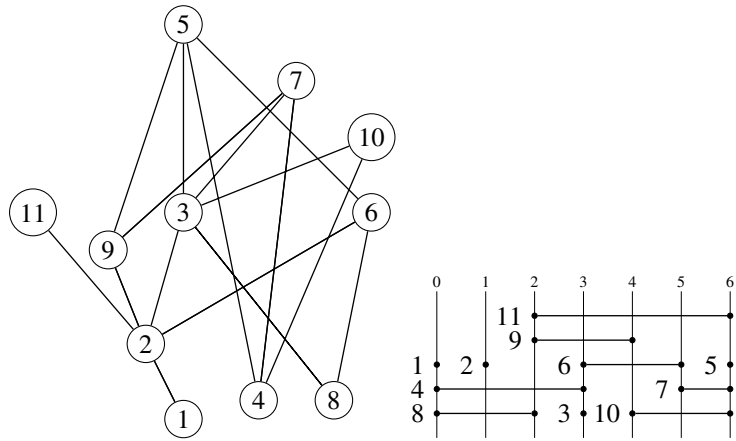
\begin{figure}[htp]   %%%%%%%%%%%%%%%%%%%%%%%%%%%%%%%%%%%%%%%
    \begin{center}
\begin{tikzpicture}[scale=.5]

\node[X](1) at (3,-.5) {$1$};%1
\node[X](2) at (2,1.5) {$2$};%2G
\node[X](8) at (7,0) {$8$};%8R

\node[X](3) at (3,5) {$3$};%3G
\node[X](6) at (8,5) {$6$};%6G

\node[X](4) at (5,0) {$4$};%4R

\node[X](5) at (3,10) {$5$};%5
\node[X](7) at (6,8.5) {$7$};%7
\node[X](10) at (8,7) {10};%10
\node[X](9) at (1,4) {$9$};%9R
\node[X](11) at (-1,5) {$11$};%11R

\draw[line width=.05em](1)--(2)--(9)--(5)--(4)--(7)--(3)--(10)--(4)(7)--(9);
\draw[line width=.05em](11)--(2)--(3)--(8)--(6)--(5)--(3)--(8)(2)--(6);

\draw[line width=.05em](4)--(7)--(9)--(2)--(1)(2)--(6);
\end{tikzpicture}
%\vskip.5em
\begin{tikzpicture}[scale=.65]
\foreach \i in {0,...,6}{
    \draw[line width=.02em](\i,7)--(\i,10);
\node[]() at (\i,10.2) {\tiny\i};}
\node[A,label=left:$11$](L11) at (2,9.5) {};%11
\node[A](R11) at (6,9.5) {};
\draw[line width=.05em](L11)--(R11);%R

\node[A,label=left:$1$](LR1) at (0,8.5) {};%1
\node[A,label=left:$2$](LR2) at (1,8.5) {};%2G
\node[A,label=left:$8$](L3) at (0,7.5) {};%8
\node[A](R3) at (2,7.5) {};
\draw[line width=.05em](L3)--(R3);%R
\node[A,label=left:$4$](L4) at (0,8) {};%4
\node[A](R4) at (3,8) {};
\draw[line width=.05em](L4)--(R4);%R
\node[A,label=left:$5$](LR5) at (6,8.5) {};%5

\node[A,label=left:$6$](L6) at (3,8.5) {};%6
\node[A](R6) at (5,8.5) {};
\draw[line width=.05em](L6)--(R6);%G

\node[A,label=left:$7$](L7) at (5,8) {};%7
\node[A](R7) at (6,8) {};
\draw[line width=.05em](L7)--(R7);
\node[A,label=left:$3$](LR3) at (3,7.5) {};%3G

\node[A,label=left:$9$](L9) at (2,9) {};%9
\node[A](R9) at (4,9) {};
\draw[line width=.05em](L9)--(R9);%R
\node[A,label=left:$10$](L10) at (4,7.5) {};%10
\node[A](R10) at (6,7.5) {};
\draw[line width=.05em](L10)--(R10);
\end{tikzpicture}
\end{center}
%%%%%%%%%%%%%%%%%%%%%
\caption{Hasse diagram and canonical representation of the smallest {$\mathbf{4}+\mathbf{1}$}-free and depth-2 interval order   $P=(0,1,2,0,3,2,3,0,2,4,2)$  that has no  $2$-count representation}
\label{Andre11}
\end{figure} 
By the classical theorem of Scott and Suppes \cite{ScottSuppes}, an interval order has 1-count interval representation if and only if it contains no {\bf 3+1} subposet; equivalently if it is a semiorder.
 Each copy of a {\bf 3+1} subposet in $P$ determines a {\it peel} and  {\it pith} pair in the representation of $P$, that is, the singleton element $x$ and the middle element $y$ of the 3-chain, respectively (see Figure \ref{ppair}).  In our diagrams a peel colored  red is longer than a pith colored green.

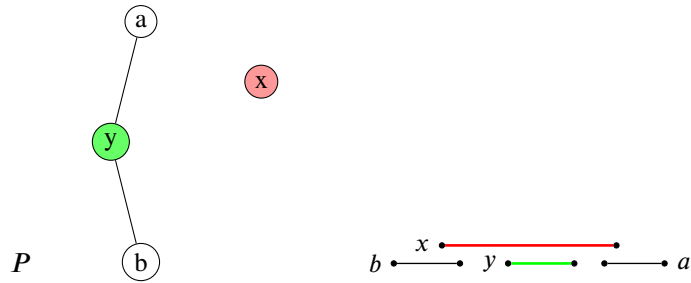
\begin{figure}[htp]
\begin{center}
\begin{tikzpicture}[scale=.8]
\node[Xgreen](y) at (0,2)  {y}; \node[]()at(-1.5,0){\large$P$};
\node[X](a) at (0.5,4){a};
\node[X](b) at (0.5,0){b};
\draw[](a)--(y)--(b);
\node[Xred](x) at (2.5,3)  {x};
\end{tikzpicture}
\hskip1cm
\begin{tikzpicture}[scale=.8]
\node[A,label=left:{$x$}](L1) at (.3,-0.3){};\node[A](R1) at (3.2,-0.3){};
\node[A,label=left:{$y$}](L2) at (1.4,-.60){};\node[A](R2) at (2.5,-.60){};
\node[A,label=left:{$b$}](L3) at (-.5,-.60){};\node[A](R3) at (.6,-.60){};
\node[A](L4) at (3,-.60){};\node[A,label=right:{$a$}](R4) at (4,-.60){};
\draw[line width=.1em,red](L1)--(R1);
\draw[line width=.1em,green](L2)--(R2);
\draw[line width=.05em](L3)--(R3);\draw[line width=.05em](L4)--(R4);
%\node[]()at(-2,-.55){$\mathcal R$};
\end{tikzpicture}
\end{center}
\caption{Peel-pith pair in poset $P$ and its interval representation}
\label{ppair}
\end{figure}

Peel-pith pairs in an interval order $P=(X,\prec)$ can be presented in form of an auxiliary graph. Denote  PP the digraph on  vertex set $X$
and an arrow  from the pith to the peel occurring in a copy of  {\bf 3+1} subposet in $P$ , for each peel-pith pair.

\begin{figure}[htp]
\begin{center}
\begin{tikzpicture}[scale=.25]
\node[X]() at (0,0) {$1$};
\node[X]() at (6,0) {$10$};
\node[X]() at (0,6) {$7$};
\node[X]() at (6,6) {$5$};
\end{tikzpicture}
\hskip1cm
\begin{tikzpicture}[scale=.25]
\node[X](4) at (0,0) {$4$};
\node[X](2) at (3,6) {$2$};
\node[X](8) at (6,0) {$8$};
\draw[->,line width=.08em] (2)--(4);\draw[->,line width=.08em] (2)--(8);
\end{tikzpicture}
\hskip1cm
\begin{tikzpicture}[scale=.25]
\node[X](9) at (0,0) {$9$};
\node[X](1) at (6,0) {$11$};
\node[X](3) at (0,6) {$3$};
\node[X](6) at (6,6) {$6$};
\draw[->,line width=.08em] (3)--(9);\draw[->,line width=.08em] (3)--(1);
\draw[->,line width=.08em] (6)--(1);
\end{tikzpicture}
\end{center}
\caption{The PP graph of $P=(0,1,2,0,3,2,3,0,2,4,2)$}
\label{PPP}
\label{cpair}
\end{figure}
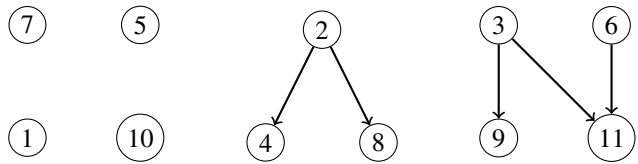

In the next proposition we state the obvious fact that the inclusion of peel-pith pairs  is forced in every representation of an interval order. 
\begin{proposition}
\label{nested} An antichain $A=\{a_1,\ldots,a_t\}$ of an interval order $P$ forms a nested chain of intervals in  every representation of $P$
if and only if  $A$ induces a directed path in the {\em PP}-graph of $P$.
\end{proposition}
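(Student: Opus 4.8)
The plan is to derive the proposition from one structural fact, which I will call the \emph{Nesting Lemma}: for distinct $u,v\in X$, the interval of $u$ lies strictly inside the interval of $v$ in \emph{every} representation of $P$ if and only if {\em PP} contains the arrow $u\to v$, and this holds if and only if $D(u)\not\subseteq D(v)$ and $U(u)\not\subseteq U(v)$, where $D(z)$ and $U(z)$ denote the strict down-set and up-set of $z$ (either of these containments failing already forces $u$ and $v$ incomparable). Granting the lemma, one direction of the proposition is immediate: if {\em PP} restricted to $A$ is a directed path, list the vertices of $A$ as $a_1,\dots,a_t$ along it; then each arrow $a_i\to a_{i+1}$ gives $I_{a_i}\subsetneq I_{a_{i+1}}$ in every representation, so $I_{a_1}\subsetneq I_{a_2}\subsetneq\cdots\subsetneq I_{a_t}$ always holds, and $A$ is a nested chain in every representation.

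For the Nesting Lemma: the equivalence of the {\em PP}-arrow with the two strict non-containments is bookkeeping. An arrow $u\to v$ arises from a $\mathbf{3+1}$ formed by a chain $x\prec u\prec y$ and an element $v$ incomparable to $x,u,y$, and then $x\in D(u)\setminus D(v)$ and $y\in U(u)\setminus U(v)$; conversely any $x\in D(u)\setminus D(v)$ and $y\in U(u)\setminus U(v)$ are automatically incomparable to $v$ (for instance $v\prec x\prec u$ would force $v\prec u$), so $\{x,u,y,v\}$ is such a $\mathbf{3+1}$. That the arrow $u\to v$ forces $I_u\subsetneq I_v$ is the remark before the proposition: in any representation $\ell_v\le r_x<\ell_u$ and $r_u<\ell_y\le r_v$. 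For the converse I would work in the canonical representation $\mathcal{C}$, whose endpoints are the integers $0,\dots,m-1$, and use the move of \emph{pushing an endpoint outward}: decreasing a left endpoint, or increasing a right endpoint, of a single interval never creates a comparability, and destroys one only if the endpoint is moved past the right end of an interval of an element below $u$ (respectively, past the left end of an interval of an element above $u$). If $D(u)\subseteq D(v)$ and $\ell_u^{\mathcal C}\ge\ell_v^{\mathcal C}$, push the left endpoint of $I_u$ out to $\ell_v^{\mathcal C}-\epsilon$: every $z\prec u$ also satisfies $z\prec v$, hence $r_z^{\mathcal C}<\ell_v^{\mathcal C}$, so nothing is destroyed and the result is a representation with $\ell_u<\ell_v$, so $I_u\not\subseteq I_v$. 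Symmetrically, if $U(u)\subseteq U(v)$ a push of the right endpoint of $I_u$ past $r_v^{\mathcal C}$ yields a representation in which neither of $I_u,I_v$ contains the other. Hence if $I_u\subsetneq I_v$ in every representation then $D(u)\not\subseteq D(v)$ and $U(u)\not\subseteq U(v)$, completing the lemma.

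For the remaining direction, suppose $A$ is a nested chain in every representation. Since this is a condition on each pair of intervals, it is enough to show any two elements $a,b\in A$ are joined by a {\em PP}-arrow: the relation ``$I_u\subsetneq I_v$ in every representation'' is transitive and asymmetric, so once it makes every pair of $A$ comparable, $A$ is a chain in it, and its vertices then lie along a directed path of {\em PP}. So let $a,b\in A$ with neither $a\to b$ nor $b\to a$ in {\em PP}. By the Nesting Lemma one of $D(a)\subseteq D(b)$, $U(a)\subseteq U(b)$ holds and one of $D(b)\subseteq D(a)$, $U(b)\subseteq U(a)$ holds. In each case, starting from $\mathcal{C}$, I apply the outward-push move to one or both of $\ell_a,r_a,\ell_b,r_b$, using the relevant set-containments to guarantee legality, so that the two intervals become ``staggered'' — $\ell_a<\ell_b$ and $r_a<r_b$, or the reverse — and hence incomparable under inclusion: for example, when $D(a)\subseteq D(b)$ and $U(b)\subseteq U(a)$, push $\ell_a$ out below $\ell_b^{\mathcal C}$ and $r_b$ out above $r_a^{\mathcal C}$; and when $D(a)=D(b)$ or $U(a)=U(b)$, first observe that in $\mathcal{C}$ one of $I_a^{\mathcal C},I_b^{\mathcal C}$ contains the other and then push that containment apart. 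Any such representation contradicts the hypothesis on $A$.

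I expect the conceptual content to lie entirely in the Nesting Lemma, and the only delicate bookkeeping to be in its converse half and in the last step: one must check that the outward-push move is legal in the canonical representation (this is exactly where integrality of the canonical endpoints is used) and that the few degenerate endpoint configurations — $a$ and $b$ twins, or a third interval meeting $I_a$ or $I_b$ exactly at an endpoint — still admit the required staggering. Everything else is routine.
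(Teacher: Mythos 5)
The paper offers no proof of Proposition \ref{nested} --- it is introduced as ``the obvious fact'' --- so there is nothing to compare your argument against; judged on its own, your proof is essentially correct and is the natural way to substantiate the claim. Isolating the Nesting Lemma ($I_u\subsetneq I_v$ is forced in every representation iff {\em PP} has the arrow $u\to v$ iff $D(u)\not\subseteq D(v)$ and $U(u)\not\subseteq U(v)$), proving its converse half by outward pushes starting from the canonical representation, and then using transitivity and asymmetry of the forced-nesting relation to convert pairwise forcing into a directed path is sound; the integrality of the canonical endpoints does give the room needed for the $\epsilon$-perturbations, and the pushes neither create comparabilities nor destroy any, exactly as you claim. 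Two small repairs are needed. First, the parenthetical in your statement of the lemma is wrong as written: $D(u)\not\subseteq D(v)$ alone only rules out $u\prec v$ (if $v\prec u$ then $v\in D(u)\setminus D(v)$, so the containment fails even though the elements are comparable), and symmetrically $U(u)\not\subseteq U(v)$ alone only rules out $v\prec u$; you need the conjunction to conclude incomparability, which is in fact all your subsequent argument uses. Second, in the degenerate case where $a$ and $b$ are twins, a single outward push merely reverses the containment (it produces $[\ell-\epsilon,r]\supsetneq[\ell,r]$); to stagger the two intervals you must push one endpoint of each interval, in opposite directions --- you flag twins as a case to check, but ``push that containment apart'' should be made explicit there. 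Finally, a remark on the statement itself rather than your proof: since forced nesting is transitive and, by your lemma, coincides with the arrow relation, the subdigraph of {\em PP} induced by a $t$-element nested antichain is a transitive tournament rather than literally a path once $t\ge 3$; your reading that the vertices of $A$ ``lie along a directed path'' is the correct interpretation of the paper's phrasing.
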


 The number of elements of a longest directed path in graph PP is the {\it depth} of the interval order.
The {\it height} of an interval order is the number of elements in a longest chain of the poset.
A semiorder has depth one since it is $\mathbf{3}+\mathbf{1}$-free, thus its PP has no arc. On the other hand, semiorders can have arbitrary height since a chain is a semiorder. 

Here we investigate  2-count representations of an interval order $P$. We assume that $P$ has depth 2; this is an obvious necessary condition for the existence of a 2-count representation. 
A 2-count representation produces a $2$ coloring that is a partition $T\cup S$ of the elements of $P$, where
 $T$ is the set of  elements represented with long intervals, and $S$  is the set of  elements represented with short  intervals.
 In the diagrams long intervals and  elements of $T$ are colored red,
 short intervals and elements of $S$ are colored green. 
  A red/green coloring of an interval order $P$  is {\it feasible} if it comes from a 2-count representation of $P$. 

Besides peel-pith pairs, there are further constraints to classifying intervals to be short or long  in every representation. We introduce a family of 
 posets embodying such constraints. 
A {\it spring} pattern\footnote{\ The name comes from a physical metaphor identifying interval length constraints as acting forces.} is a partially $2$-colored poset with six elements defined by the Hasse diagram in  Figure \ref{Sp1}, where  $4,5\in T$, $2,3\in S$ and $1,6$ have arbitrary color. Notice that $5,3$ form a peel-pith pair. The family of springs consists of all partially colored posets that are defined by Figure \ref{Sp1}  and the dual of these posets. 

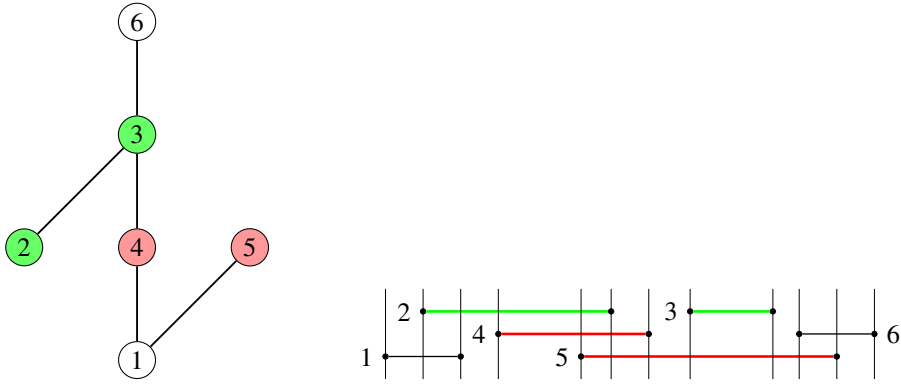
\begin{figure}[htp]
\begin{center}
\begin{tikzpicture}%[scale=.8]
\node[X](v) at (1,4) {6};
\node[Xgreen](d2) at (1,2.5) {3};
\node[Xred](tx) at (1,1) {4};
\node[X](u) at (1,-.5) {1};
\node[Xgreen](d1) at (-.5,1) {2};
\node[Xred](ty) at (2.5,1) {5};
\draw[line width=.07em] (d1)--(d2)(v)--(d2)--(tx)--(u)--(ty);
\end{tikzpicture}
\hskip1cm
\begin{tikzpicture}%[scale=.8]
\node[A,label=left:{$4$}](L0) at (-1,0.){};\node[A](R0) at (1,0.){};
\node[A,label=left:{$5$}](L1) at (.1,-0.3){};\node[A](R1) at (3.5,-0.3){};
\node[A,label=left:{$3$}](L2) at (1.55,0.3){};\node[A](R2) at (2.65,0.3){};
\node[A,label=left:{$2$}](L3) at (-2,0.3){};\node[A](R3) at (.5,0.3){};
\node[A](L4) at (3,0){};\node[A,label=right:{$6$}](R4) at (4,0){};
\node[A,label=left:{$1$}](L) at (-2.5,-.3){};\node[A](R) at (-1.5,-.3){};
\draw[line width=.1em,red](L0)--(R0);
\draw[line width=.1em,red](L1)--(R1);
\draw[line width=.1em,green](L2)--(R2);
\draw[line width=.1em,green](L3)--(R3);
\draw[line width=.05em](L4)--(R4);\draw[line width=.05em](L)--(R);
\foreach \i in {-2.5,-2,-1.5,-1,.1,.5,1,1.55,2.65,3,3.5,4}{
    \draw[line width=.02em](\i,-.6)--(\i,.6);
}
\end{tikzpicture}
\caption{A spring pattern and its interval representation that can not be made $2$-count}
\label{Sp1}
\end{center}
\end{figure}

\begin{proposition}\label{prop:nospring} A feasible 2-coloring  contains no spring. 
\end{proposition}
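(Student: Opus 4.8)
The plan is to argue by contradiction. Suppose some feasible $2$-coloring of an interval order $P$ contains a spring, and fix a $2$-count interval representation realizing that coloring; write $I_x=[\ell_x,r_x]$ for the interval assigned to $x$, let $\lambda$ be the common length of the long (red) intervals and $\mu$ the common length of the short (green) intervals. First I would reduce to the spring $Q$ drawn in Figure~\ref{Sp1}: if instead $P$ contained one of the dual springs, then reflecting the real line carries the given representation to a $2$-count representation of the dual order $P^{\ast}$ having the same interval lengths, hence the same color classes, so $P^{\ast}$ would carry a feasible $2$-coloring containing $Q$; it therefore suffices to handle $Q$. Accordingly, let $I_1,\dots,I_6$ represent the elements $1,\dots,6$ of $Q$; by definition of a spring, $I_4$ and $I_5$ have length $\lambda$ while $I_2$ and $I_3$ have length $\mu$.

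The core of the argument is to estimate the left endpoint $\ell_6$ against the right endpoint $r_1$ in two incompatible ways. The Hasse diagram of $Q$ contains the chain $1\prec4\prec3\prec6$, and $x\prec y$ forces $r_x<\ell_y$; using also $r_3=\ell_3+\mu$ and $r_4=\ell_4+\lambda$ this telescopes to
\[
\ell_6 \;>\; r_3 \;=\; \ell_3+\mu \;>\; r_4+\mu \;=\; \ell_4+\lambda+\mu \;>\; r_1+\lambda+\mu .
\]
On the other hand, $Q$ exhibits the incomparabilities $1\parallel2$, $2\parallel5$ and $5\parallel6$, and incomparable intervals overlap, which yields $\ell_6\le r_5$, $\ell_5\le r_2$ and $\ell_2\le r_1$; combining these with $r_5=\ell_5+\lambda$ and $r_2=\ell_2+\mu$ gives
\[
\ell_6 \;\le\; r_5 \;=\; \ell_5+\lambda \;\le\; r_2+\lambda \;=\; \ell_2+\mu+\lambda \;\le\; r_1+\mu+\lambda .
\]
The two displays are contradictory, and this contradiction establishes the proposition.

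The only step that takes any thought is deciding which two quantities to play against each other. The lower bound on $\ell_6$ reflects that the long interval $I_4$ is wedged between $I_1$ and $I_3$, and the upper bound reflects that the long interval $I_5$ must stretch from $I_2$ past $I_3$ all the way to $I_6$ --- the ``force balance'' that the spring metaphor is meant to encode --- both measured from $r_1$. Once that pairing is found the rest is routine bookkeeping. It is perhaps worth noting that the argument never uses the colors of $1$ and $6$, matching the fact that these two elements are left uncolored in the definition of a spring, and that it does not even require $\lambda\ne\mu$.
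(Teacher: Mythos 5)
Your proof is correct and follows essentially the same route as the paper's: both extract the endpoint inequalities forced by the chain $1\prec 4\prec 3\prec 6$ and by the incomparabilities $1\parallel 2$, $2\parallel 5$, $5\parallel 6$, impose $|I_2|=|I_3|$ and $|I_4|=|I_5|$, and derive a linear contradiction (the paper pits $\ell_5-\ell_4$ against $r_5-r_4$, while you bound $\ell_6-r_1$ from both sides --- the same constraints arranged differently). Your explicit reduction of the dual springs by reflecting the real line, and the remark that $\lambda\neq\mu$ is never actually used, are small but welcome additions that the paper leaves implicit.
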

\begin{proof} Assume that  there is a $2$-count interval  representation of a spring, $[\ell_x,r_x]_{x=1}^6$, where 
the endpoints must satisfy
\begin{eqnarray}
\label{a} \ell_2\leq \ell_1<\ell_4,\ell_5\leq r_2,& \text{\ and}\\
\label{b} \ell_2 \leq  r_1<\ell_4\leq  r_4<\ell_3\leq r_3< \ell_6\leq r_5,& 
\end{eqnarray}
as seen in Figure \ref{Sp1}. By assumption, $2,3\in S$ and $4,5\in T$ in the feasible $2$-coloring. %Furthermore, 5 is a peel and 3 is a pith in the poset, thus we have: 
Therefore, 
\begin{eqnarray}
\label{short} r_2-\ell_2 &=&r_3-\ell_3 , \text{\ and}\\
\label{long} r_4-\ell_4 &=&r_5-\ell_5.
\end{eqnarray}
Using  (\ref{a}),  (\ref{short}) and   (\ref{b}) we obtain 
 $\ell_5-\ell_4 < r_2-\ell_2 = r_3-\ell_3 < r_5-r_4,$ 
contradicting  (\ref{long}).
\end{proof}

It is straightforward to check that the interval order $P=(0,1,2,0,3,2,3,0,2,4,2)$ in Figure \ref{Andre11} is {$\mathbf{4}+\mathbf{1}$}-free;
the PP graph in Figure \ref{PPP} shows that it  has  depth-2. To verify that $P$ has no 2-count representation notice that the subset 
$\{1,2,4,6,7,9\}$ 
induces in $P$ a spring, then apply Proposition \ref{prop:nospring}. 

 By the classical theorem of Scott and Suppes \cite{ScottSuppes}, an interval order has 1-count interval representation if and only if it contains no {$\mathbf{3}+\mathbf{1}$} subposet; equivalently if it is a semiorder. 
While the exclusion of {$\mathbf{3}+\mathbf{1}$} subposets  completely characterizes  1-count interval orders, forbidding  {$\mathbf{4}+\mathbf{1}$} subposets from interval orders is not consequential regarding the existence of a 2-count interval representation. However,  we conjecture that that the exclusion of {$\mathbf{4}+\mathbf{1}$} posets and the springs yields a particular family of interval orders having a 2-count interval representation.
\begin{conjecture} 
\label{4plus1}
A {$\mathbf{4}+\mathbf{1}$}-free, depth-2 interval order $\Po$ admits 
a 2-count representation if and only if $P$ has a $2$-coloring such that
every peel is red, every pith is green, and no spring pattern occurs as a colored subposet.
\end{conjecture}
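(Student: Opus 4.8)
\emph{The forward direction is immediate from what is already in place.} Given a $2$-count representation of $P$, put an element in $T$ if its interval has the longer of the two lengths and in $S$ otherwise. In any copy of $\mathbf{3}+\mathbf{1}$ the interval of the peel properly contains that of the pith, so the peel is strictly longer than the pith; since only two lengths are available, the peel must be red and the pith green. Hence every peel is red and every pith is green, and Proposition \ref{prop:nospring} guarantees that no spring occurs. (Proposition \ref{nested} together with the same strict-containment observation also re-derives the depth-$2$ necessary condition.) So the real content of the conjecture is the converse: from a $\mathbf{4}+\mathbf{1}$-free, depth-$2$ interval order $P$ equipped with a $2$-coloring in which peels are red, piths are green, and no spring occurs, one must manufacture an actual $2$-count representation.

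\emph{Plan for the converse.} The intended route is to generalize the linear-programming procedure behind Theorem \ref{2countperm}. Fixing the coloring $T\cup S$, the existence of a $2$-count representation realizing it amounts to the feasibility of a system of linear inequalities in the endpoint variables of $P$ together with two length parameters $\ell<L$ (every red interval has length $L$, every green interval length $\ell$, and the strict-order and overlap relations of $P$ are respected). Theorem \ref{2countperm} solves exactly such a system in the permutation case and shows that the only obstruction there is a decreasing subsequence of length $3$. The steps would be: (i) encode the part of $P$ responsible for the length interactions --- essentially the nested pairs forced by the PP-graph and the intervals interleaving them --- as a permutation $\pi=\pi(P,T\cup S)$ so that the $P$-system is feasible if and only if the $\pi$-system is; (ii) apply Theorem \ref{2countperm} to reduce feasibility to ``$\pi$ has longest decreasing subsequence at most $2$''; and (iii) prove, using the $\mathbf{4}+\mathbf{1}$-free and depth-$2$ hypotheses, that $\pi$ contains a decreasing triple precisely when the colored poset $P$ contains a spring.

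\emph{The dictionary between decreasing triples and springs} is where the two structural hypotheses enter. A decreasing triple in $\pi$ should be lifted to the six elements of a spring: the two red and the two green elements ($4,5$ and $2,3$ in Figure \ref{Sp1}) are read off from the length constraints that the triple encodes, while the two free-color elements $1$ and $6$ --- the bottom and top of the $4$-chain in the spring --- are furnished by $\mathbf{4}+\mathbf{1}$-freeness, which bounds what may lie above and below the configuration and forces it to ``close up'' into a spring rather than extend into a larger obstruction; depth-$2$ excludes the degenerate case of three mutually nested intervals. The reverse implication, that a spring forces a forbidden decreasing pattern, is a direct check from the encoding. Granting the dictionary, the conjecture follows: no spring $\iff$ no decreasing triple $\iff$ the $\pi$-system, hence the $P$-system, is feasible.

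\emph{The main obstacle} is the faithfulness of the reduction, in two respects. First, Theorem \ref{2countperm} governs a single homogeneous length class, whereas here the red and green lengths interact and the canonical representation carries the unavoidable singletons $[0,0]$ and $[m-1,m-1]$; one needs either a two-colored refinement of the permutation theorem or an argument that, once springs are excluded, the green constraints are automatically compatible with the red ones. Second, and more seriously, moving beyond height $3$ (where Theorem \ref{2count} already establishes the conjecture, and where springs cannot even occur): in a tall $\mathbf{4}+\mathbf{1}$-free order many PP-paths coexist and the global crossing pattern of intervals need not be a single permutation, so one must either localize and glue the permutation argument or carry out a Farkas-type analysis directly and show that every infeasibility certificate encodes a spring. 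It is exactly this last assertion that fails without the $\mathbf{4}+\mathbf{1}$-free hypothesis --- witness the permutations with decreasing subsequences of length at most $3$ that nonetheless lack a $3$-count representation, and the depth-$2$ interval orders with no $k$-count representation, mentioned in the abstract --- so the hypothesis must be used in an essential way, and isolating how is the heart of the problem.
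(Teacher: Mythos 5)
This statement is Conjecture \ref{4plus1}; the paper does not prove it. It only establishes the height-$3$ special case (Theorem \ref{2count}), where both the $\mathbf{4}+\mathbf{1}$ pattern and the spring are vacuously excluded because each has height $4$, so there is no ``paper proof'' of the general statement to compare against. Your forward direction is correct and is essentially what the paper's own machinery already yields: in any $\mathbf{3}+\mathbf{1}$ the peel's interval strictly contains the pith's, so with only two lengths the peel must be long (red) and the pith short (green), and Proposition \ref{prop:nospring} rules out springs in the induced coloring. (One point worth making explicit: an element that is simultaneously a peel and a pith would need both colors, but depth~$2$ together with Proposition \ref{nested} excludes exactly this, since it would correspond to a directed $2$-path in the PP-graph and hence a nested $3$-chain.)

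The converse, however, is not proved; what you give is a program, and you candidly identify its unresolved core yourself. Concretely: step (i), the construction of a permutation $\pi(P,T\cup S)$ whose $2$-count feasibility is equivalent to that of the colored poset system, is never carried out, and it is not clear such a single permutation exists once $P$ has height greater than $3$ --- the endpoints of a general interval order do not interleave as one left-block followed by one right-block, which is the structure condition (\ref{endpoints}) requires. Step (iii), the dictionary ``infeasibility certificate $\iff$ spring,'' is asserted but not established, and this is precisely where the conjecture's difficulty lives: a Farkas-type certificate for the system (a)--(d) is a cycle of slack inequalities, and showing that every such cycle in a $\mathbf{4}+\mathbf{1}$-free, depth-$2$ order collapses to the six-element spring (rather than to some longer, irreducible alternating configuration) is an open combinatorial problem, as the paper's own example $P=(0,1,2,0,3,2,3,0,2,4,2)$ and Fishburn's depth-$2$ orders with unbounded count suggest. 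So the proposal correctly disposes of the easy direction and correctly locates the hard one, but it does not close the gap --- which is consistent with the statement remaining a conjecture in the paper.
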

 
In Section \ref{main} we prove a test case of Conjecture \ref{4plus1}, where the height
of $\Po$ is 3. In fact, in this case, 
neither a copy of {$\mathbf{4}+\mathbf{1}$} nor a spring appear as a subposet of $\Po$, since each has height 4. 

\subsection{Interval representation of permutations}
\label{subpermutation}
 An {\it interval representation}  of a permutation $\pi$ of $[n]$ is a family of real intervals $ [\ell_j,r_j]_{i=1}^n$ with integral endpoints satisfying
 \begin{equation}\label{endpoints}
\ell_{\pi(1)}<\cdots< \ell_{\pi(n)}<r_1< \cdots<r_n, 
\end{equation}
that is,  the right endpoints are in natural order, and the left endpoints correspond to the permutation  
   (see an example in Figure \ref{abra1}). 

 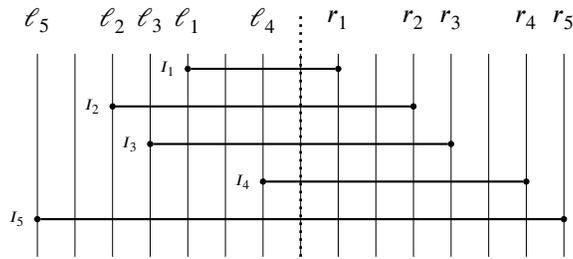
\begin{figure}[htp]
\begin{center}
\begin{tikzpicture}%[scale=.6]
\foreach \i in {-7,...,7}{
    \draw[line width=.01em](.5*\i,.5)--(.5*\i,3.2);}
  \node[]() at (-7*.5,3.65) {$\ell_5$}; 
  \node[]() at (-5*.5,3.65) {$\ell_2$};
  \node[]() at (-4*.5,3.65) { $\ell_3$};
  \node[]() at (-3*.5,3.65) { $\ell_1$};
  \node[]() at (-.5,3.65) { $\ell_4$};
  
   \node[]() at (7*.5,3.65) {$r_5$}; 
  \node[]() at (3*.5,3.65) {$r_2$};
  \node[]() at (4*.5,3.65) { $r_3$};
  \node[]() at (.5,3.65) { $r_1$};
  \node[]() at (3,3.65) { $r_4$};
\node[A,label=left:\tiny$I_5$](L4) at (-7*.5,1) {};%5
\node[A,label=right:\tiny$$](R4) at (7*.5,1) {};
\draw[line width=.08em](L4)--(R4);
\node[A,label=left:\tiny$I_2$](L2) at (-5*.5,2.5) {};%2
\node[A,label=right:\tiny$$](R2) at (3*.5,2.5) {};
\draw[line width=.08em](L2)--(R2);
\node[A,label=left:\tiny$I_3$](L3) at (-4*.5,2) {};%3
\node[A,label=right:\tiny$$](R3) at (4*.5,2) {};
\draw[line width=.08em](L3)--(R3);
\node[A,label=left:\tiny$I_1$](L5) at (-3*.5,3) {};%1
\node[A,label=right:\tiny$$](R5) at (.5,3) {};
\draw[line width=.08em](L5)--(R5);
\node[A,label=left:\tiny$I_4$](L1) at (-.5,1.5) {};%4
\node[A,label=right:\tiny$$](R1) at (6*.5,1.5) {};
\draw[line width=.08em](L1)--(R1);
 \draw[line width=.1em,dotted](0,.5)--(0,3.75);
\end{tikzpicture}
\end{center}
\caption{An interval representation of $\pi=[5,2,3,1,4]$}
\label{abra1}
\end{figure}

A permutation $\pi$ is {\it $k$-count} if  
$\pi$ has
an interval representation such that  the number of distinct interval lengths is $k$. 
The {\it depth} of $\pi$ is defined as the length of the longest decreasing subsequence of $\pi$, that is a longest chain of nested intervals in an  interval representation of $\pi$; if this number is $k$, then $\pi$ is called a {\it depth-k permutation}. 

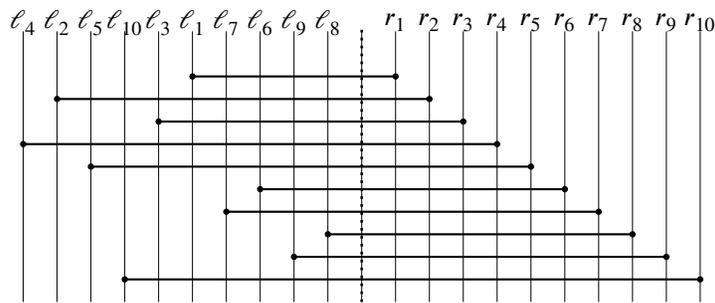
\begin{figure}[htp]
\begin{center}
\begin{tikzpicture}[scale=.6]
\foreach \i in {-10,...,10}{
   \draw[line width=.02em](.75*\i,-5)--(.75*\i,1);}
   \foreach \i in {1,...,10}{
   \node[]()at(.75*\i,1.2){$r_{\i}$};}
     \node[]()at(-.75*10,1.2){$\ell_4$};  \node[]()at(-.75*9,1.2){$\ell_2$};  \node[]()at(-.75*8,1.2){$\ell_5$};  \node[]()at(-.75*7,1.2){$\ell_{10}$};
       \node[]()at(-.75*6,1.2){$\ell_3$};  \node[]()at(-.75*5,1.2){$\ell_1$};  \node[]()at(-.75*4,1.2){$\ell_7$};  \node[]()at(-.75*3,1.2){$\ell_6$};
         \node[]()at(-.75*2,1.2){$\ell_9$};  \node[]()at(-.75*1,1.2){$\ell_8$};%2354.10.16798
 
\draw[line width=.08em] (-.75*5,-0.5*0) -- node [above] {$$} (0.75*1,-0.5*0); %1
\draw[line width=.08em] (-.75*6,-0.5*2) -- node [above] {$$} (0.75*3,-0.5*2); %3
\draw[line width=.08em] (-.75*9,-0.5*1) -- node [above] {$$} (0.75*2,-0.5*1); %2x
\draw[line width=.08em] (-.75*10,-0.5*3) -- node [above] {$$} (0.75*4,-0.5*3); %4x
\draw[line width=.08em] (-.75*8,-0.5*4) -- node [above] {$$} (0.75*5,-0.5*4); %5
\draw[line width=.08em] (-.75*4,-0.5*6) -- node [above] {$$} (0.75*7,-0.5*6); %7
\draw[line width=.08em] (-.75*3,-0.5*5) -- node [above] {$$} (0.75*6,-0.5*5); %6
\draw[line width=.08em] (-.75*1,-0.5*7) -- node [above] {$$} (0.75*8,-0.5*7);%8 
\draw[line width=.08em] (-.75*2,-0.5*8) -- node [above] {$$} (0.75*9,-0.5*8); %9
\draw[line width=.08em] (-.75*7,-0.5*9) -- node [above] {$$} (0.75*10,-0.5*9);%10x 
   \draw[line width=.1em,dotted](0,-5)--(0,1.1);
\node[A]()at  (-.75*5,-0.5*0) {};\node[A]()at (0.75*1,-0.5*0){};
\node[A]()at (-.75*6,-0.5*2) {};\node[A]()at(0.75*3,-0.5*2){};
\node[A]()at (-.75*9,-0.5*1){};\node[A]()at (0.75*2,-0.5*1){};%x
\node[A]()at  (-.75*10,-0.5*3) {};\node[A]()at(0.75*4,-0.5*3){};%x
\node[A]()at  (-.75*8,-0.5*4) {};\node[A]()at (0.75*5,-0.5*4){};
\node[A]()at  (-.75*4,-0.5*6){};\node[A]()at (0.75*7,-0.5*6){};
\node[A]()at (-.75*3,-0.5*5) {};\node[A]()at(0.75*6,-0.5*5){};
\node[A]()at  (-.75*1,-0.5*7) {};\node[A]()at (0.75*8,-0.5*7){}; 
\node[A]()at (-.75*2,-0.5*8) {};\node[A]()at(0.75*9,-0.5*8){};
\node[A]()at  (-.75*7,-0.5*9) {};\node[A]()at (0.75*10,-0.5*9){};%x 
\end{tikzpicture}
\end{center}
\caption{ $\pi=[4,2,5,10,3,1,7,6,9,8]$, a smallest depth-3 permutation with no $3$-count representation}
\label{abra6}
\end{figure}

The depth and interval count numbers have the obvious relation that any interval representation of a depth-$k$ permutation uses at least $k$ distinct  lengths in a nested chain of intervals. 
 We prove in Section \ref{mutations} that  
{a permutation  has a 2-count  interval representation if and only if it has depth at most $2$}
 (Theorem \ref{2countperm}).  
 
 Note that equality between depth and interval count numbers similar to Theorem \ref{2countperm} does not hold for depth-$k$ permutations, $k\geq 3$. 
There are several examples of depth-3 permutations admitting no
3-count integral representation.
 Computer search shows that $\pi=[4,2,5,10,3,1,7,6,9,8]$ is a  smallest depth-3 permutation with no 3-count representation. An interval representation of this permutation is displayed in Figure \ref{abra6}. 
 This permutation has no $3$-count representation. We include a verification 
at the end of the paper.
 
An indexed partition $(T_1,\ldots,T_k)$ of $[n]$ is called a \emph{sorted $k$-coloring}, 
if for all inversions $(x,y)$ such that $x\in T_i$ and $y\in T_j$, we have $i<j$. In terms of an interval representation $\{I_j\}_{j\in[n]}$ of a permutation, $I_x\subsetneq I_y$ implies that  $x\in T_i$ and  $y\in T_j$ with $i<j$. 

\begin{lemma}
\label{kcolor}
If the depth of a permutation $\pi$ is $k$, then $\pi$ has a sorted $k$-coloring.
\end{lemma}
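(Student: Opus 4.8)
The plan is to realize the desired colouring as a Mirsky-type decomposition of a partial order naturally attached to $\pi$. Define a relation $\prec$ on the ground set $[n]$ by declaring $x\prec y$ exactly when $(x,y)$ is an inversion of $\pi$, that is, $x<y$ and $y$ precedes $x$ in the one-line notation of $\pi$; by \eqref{endpoints} this is the same as $I_x\subsetneq I_y$ in an interval representation of $\pi$. First I would verify that $\prec$ is a strict partial order: irreflexivity and antisymmetry are immediate from the requirement $x<y$, and transitivity holds because $x\prec y\prec z$ forces $x<z$ together with $z,x$ occurring in that order in $\pi$. The point is that a chain $a_1\prec a_2\prec\cdots\prec a_m$ of $([n],\prec)$ is precisely a decreasing subsequence of $\pi$ of length $m$, equivalently a nested chain $I_{a_1}\subsetneq\cdots\subsetneq I_{a_m}$ of intervals; so the hypothesis ``$\pi$ has depth $k$'' says exactly that the longest chain of $([n],\prec)$ has $k$ elements.

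Next I would introduce the height function $h\colon[n]\to\{1,\dots,k\}$, where $h(x)$ is the maximum number of elements of a chain of $([n],\prec)$ whose largest element is $x$, and set $T_i:=\{x\in[n]:h(x)=i\}$ for $i=1,\dots,k$. These sets form an indexed partition of $[n]$ (each $T_i$ nonempty, since the elements of a longest chain $x_1\prec\cdots\prec x_k$ satisfy $h(x_i)=i$). To check that $(T_1,\dots,T_k)$ is a sorted $k$-colouring, take an inversion $(x,y)$ with $x\in T_i$ and $y\in T_j$; then $x\prec y$, so appending $y$ to a longest chain topped by $x$ produces a chain topped by $y$ with $h(x)+1$ elements, forcing $j=h(y)\ge h(x)+1=i+1>i$, which is exactly the required inequality. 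Equivalently, one could invoke Mirsky's theorem to split $([n],\prec)$ into $k$ antichains and then observe that the height labelling makes the split strictly order-increasing.

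I do not expect a genuine obstacle here: this is the standard longest-chain (Mirsky) argument, and the only thing that needs care is the orientation convention — in a sorted colouring the inner (shorter) interval of a nested pair receives the \emph{smaller} colour index, which is exactly why $h$ is defined through chains having $x$ at the top rather than at the bottom. It is also worth noting, once, that the relation $\prec$, and hence the whole construction, depends only on $\pi$ and not on the particular interval representation chosen, since \eqref{endpoints} forces the same nesting pattern in every representation.
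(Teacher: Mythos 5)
Your proposal is correct and follows essentially the same route as the paper: both consider the inclusion order of the intervals (equivalently, the inversion relation on $[n]$), observe that its chains are the decreasing subsequences of $\pi$, and extract the sorted $k$-coloring via a Mirsky decomposition. Your height-function labelling is just an explicit form of the paper's ``successive removal of minimal elements,'' with the added (harmless) verifications that the relation is a partial order and that each colour class is nonempty.
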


\begin{proof}
Consider the inclusion order $P$ of the intervals in any interval representation  of $\pi$.   Notice that the chains of $P$ correspond to the decreasing subsequences of $\pi$, so the height of $P$ is $k$. By successive removal of minimal elements, $P$ can be partitioned into antichains $T_1,\ldots,T_k$ 
%(c.f.\ dual of Dilworth's Theorem, a.k.a.\ Mirsky's Theorem); 
(c.f.\ Mirsky's Theorem \cite{Mirsky}). 
It is immediate from the definition of $P$ that these form a sorted $k$-coloring.
\end{proof}

A sorted $k$-coloring $T_1,\ldots,T_k$ of $\pi$ is {\it feasible} if there are integers $\alpha_1<\cdots <\alpha_k$ and an interval representation $\{I_j\}_{j\in[n]}$  of $\pi$ such that  $|I_x|=\alpha_i$ for every $x\in T_i$.

%%%%%%%%%%%%%%%%%%%%%%%%%%%%%%%%%%%%%
\section{The $2$-count representation of depth-2 permutations}%%%%%%%%%%%%%%%%%%
\label{mutations}
In this section permutations having a 2-count  interval representation are characterized. 
%In terms of sorted colorings introduced in  Section \ref{prelim}, 
In the next theorem the bulk of the proof  is a procedure that verifies that
 every sorted 2-coloring of a depth-2 permutation is feasible.  
\begin{theorem}
\label{2countperm}
A permutation has a 2-count  interval representation if and only if it has depth at most 2.
\end{theorem}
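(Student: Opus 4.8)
The plan is to prove both directions, with the real work in the sufficiency direction.

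\smallskip

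\noindent\textbf{Necessity.} If $\pi$ has a $2$-count interval representation, then in that representation no chain of nested intervals can have more than two members, since a nested chain $I_{x_1}\subsetneq I_{x_2}\subsetneq I_{x_3}$ forces three distinct lengths. A decreasing subsequence of $\pi$ of length $3$ would, by the defining inequalities \eqref{endpoints}, produce exactly such a nested chain. Hence the longest decreasing subsequence has length at most $2$, i.e.\ the depth of $\pi$ is at most $2$.

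\smallskip

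\noindent\textbf{Sufficiency --- the main work.} Suppose $\pi$ has depth at most $2$. By Lemma \ref{kcolor} it has a sorted $2$-coloring $(T_1,T_2)$ (if the depth is $1$, $\pi$ is already an increasing permutation and a single interval length works, so assume depth exactly $2$). The goal is to show this sorted $2$-coloring is \emph{feasible}: I must produce integers $\alpha_1<\alpha_2$ and integer left/right endpoints satisfying \eqref{endpoints} with $|I_x|=\alpha_1$ for $x\in T_1$ and $|I_x|=\alpha_2$ for $x\in T_2$. Equivalently, fixing the right endpoints to be, say, $r_j=j+M$ for a large constant $M$ (so the right endpoints are in natural order automatically), I need to choose the left endpoints $\ell_{\pi(1)}<\cdots<\ell_{\pi(n)}<r_1$ so that $\ell_j = r_j-\alpha_1$ for all $j\in T_1$ and $\ell_j=r_j-\alpha_2$ for all $j\in T_2$. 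This is a linear feasibility problem: I would set up the inequalities $\ell_{\pi(i)}<\ell_{\pi(i+1)}$ for consecutive positions and translate each into a constraint relating $r_{\pi(i)},r_{\pi(i+1)}$ and the two unknown lengths $\alpha_1,\alpha_2$. When $\pi(i)$ and $\pi(i+1)$ lie in the same class there is no constraint on $\alpha_2-\alpha_1$ beyond what the $r$'s give; when $\pi(i)\in T_1,\pi(i+1)\in T_2$ the constraint reads $r_{\pi(i)}-\alpha_1 < r_{\pi(i+1)}-\alpha_2$, i.e.\ $\alpha_2-\alpha_1 < r_{\pi(i+1)}-r_{\pi(i)}$; when $\pi(i)\in T_2,\pi(i+1)\in T_1$ it reads $\alpha_2-\alpha_1 > r_{\pi(i)}-r_{\pi(i+1)}$. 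Because $(T_1,T_2)$ is a \emph{sorted} $2$-coloring, every inversion $(x,y)$ has $x\in T_1,y\in T_2$, which controls the signs: I expect the lower-bound constraints to come only from non-inversions (where $r_{\pi(i)}<r_{\pi(i+1)}$ so the right side is already positive and can be met for free) while the dangerous upper-bound constraints $\alpha_2-\alpha_1<\text{gap}$ come from the structure of the coloring, and I need to show the window of admissible values of $\alpha_2-\alpha_1$ is nonempty, then scale the $r$'s (increasing $M$ and spacing them out) to make it contain an integer. The cleanest route is probably the ``procedure'' the abstract advertises: process the permutation left to right, greedily assigning the smallest legal integer left endpoint to $\pi(1),\pi(2),\dots$ consistent with the forced length of each element's class, maintaining the invariant that the partial assignment extends to a full solution; one shows the greedy choice never gets stuck by using that any future element forced to lie above the current one in position but below it in value lies in the opposite color class, so the required length difference is available.

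\smallskip

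\noindent\textbf{Expected main obstacle.} The crux is verifying that the single scalar $\alpha_2-\alpha_1$ can be chosen to simultaneously satisfy all the ordering constraints --- i.e.\ that the lower bounds it must exceed are all strictly below the upper bounds it must respect. This amounts to showing that there is no ``cyclic'' obstruction among the position-order constraints, which is exactly where being a \emph{sorted} $2$-coloring (every inversion goes from $T_1$ to $T_2$) and depth $2$ (no three mutually nested intervals, so the inversion pattern is structurally restricted) must be used. I anticipate the argument goes by induction on $n$ (delete the last position or the element in the last position) or by the explicit greedy procedure with an invariant; the bookkeeping of which inequalities are active and why the gap windows telescope correctly is the part that requires care, and I would organize it as a lemma that the system $\{\ell_{\pi(i)}<\ell_{\pi(i+1)}\}$ together with the length constraints is feasible over the rationals, after which clearing denominators and translating gives the required integral representation.
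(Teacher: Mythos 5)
Your necessity argument and your reduction of sufficiency to a one-parameter linear feasibility problem --- fix the right endpoints, recover each left endpoint as $r_x$ minus its class length, and observe that only consecutive, oppositely colored pairs in the left-endpoint order constrain $\alpha_2-\alpha_1$ --- coincide with the paper's setup. But your proposal stops at the crux it correctly names: you never prove the window of admissible values of $\alpha_2-\alpha_1$ is nonempty, and the specific hope you express (that with $r_j=j+M$ the lower bounds ``can be met for free'' because they come only from non-inversions) is false. A consecutive pair $\pi(i)\in T_2$, $\pi(i+1)\in T_1$ with $I_{\pi(i+1)}\subsetneq I_{\pi(i)}$ is exactly what a sorted $2$-coloring permits, and it forces the strictly positive lower bound $\alpha_2-\alpha_1>r_{\pi(i)}-r_{\pi(i+1)}$. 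Concretely, take $\pi=[1,3,2,6,4,5]$, which has depth $2$ and Mirsky coloring $T_1=\{1,2,4,5\}$, $T_2=\{3,6\}$. With $r_j=j$ the consecutive pair $(6,4)$ yields the lower bound $\alpha_2-\alpha_1>r_6-r_4=2$, while the consecutive pair $(1,3)$ yields the upper bound $\alpha_2-\alpha_1<r_3-r_1=2$; the window is empty, and neither translating nor uniformly rescaling the $r_j$ changes this. So some genuinely non-uniform re-spacing of the right endpoints is unavoidable, and nothing in your sketch produces it.

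That re-spacing is the actual content of the paper's proof. One defines, for each oppositely colored consecutive pair, a \emph{slack interval} between the two right endpoints (a TS-slack giving a lower bound, an ST-slack giving an upper bound on $\alpha_2-\alpha_1$), and proves a structural lemma: two distinct slack intervals either share no endpoint, or one is a TS-slack nested inside an ST-slack with a common endpoint, and every ST-slack interval is maximal in the family. This permits an application of Roberts' theorem to redraw all maximal slack intervals with a common length while preserving the order of all endpoints, followed by an $\epsilon$-perturbation pushing each ST-slack outward; afterwards every TS-slack is strictly smaller than every ST-slack, the window for $\alpha_2-\alpha_1$ is nonempty, and your concluding scaling argument applies. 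Neither your induction sketch nor the greedy left-to-right assignment is developed far enough to substitute for this step (you give no invariant and no proof that the greedy choice never gets stuck), so as written the proof has a genuine gap at its central point.
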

\begin{proof} Intervals in a nested 3-chain have distinct lengths; therefore, having depth 2 (or less)  is necessary for the existence of a 2-count representation. 

Suppose that $\pi$ has depth 2, and let $\{I_j\}_{j=1}^n$, be an  interval representation of $\pi$. We apply Lemma \ref{kcolor} with $k=2$. Let  $[S,T]$ be a sorted $2$-coloring, a 2-partition of $[n]$ such that there are inversions neither in $S$ nor in $T$. Furthermore, $I_y\not\subset I_x$ for every $x\in S$, $y\in T$. 
%In the diagrams, intervals are distinguished using green  for short intervals representing  elements in $S$,  and  using red  for long intervals representing  elements in $T$. 

 To see that $[S,T]$ is feasible, we need to find a 2-count representation  
 $\{[\ell_j,r_j]\}_{j=1}^n$, where the common  interval length in $S$  is $\alpha$, and the common interval  length in $T$ is $\beta>\alpha$. In other words, for a fixed permutation $\pi$ we need to find a feasible solution
  for the integral linear system subject to the conditions:
  \vskip.5em
 \begin{itemize}
\item[(a)] $\ell_{\pi(1)} < \cdots < \ell_{\pi(n)} < r_1<\cdots<r_n$,
\item[(b)] $\alpha < \beta$,
\item[(c)] $r_j-\ell_j=\alpha$\quad if $I_j\in S$,
\item[(d)] $r_j-\ell_j=\beta$\quad  if $I_j\in T$.
\end{itemize}
\vskip.5em
For an arbitrary integral interval representation $\mathcal R$ of $\pi$, (a) is equivalent with (\ref{endpoints}). 
If (b), (c), and (d) are satisfied with some $\alpha<\beta$,
then we have 
 \begin{eqnarray}\label{adjustleft}
 \ell_x\ =\ \  \left\{\begin{array}{ccc}
r_x-\alpha   & \text {if} & x\in S\\
\\
r_x-\beta  &  \text {if} & x\in T \ .
 \end{array}
  \right.
\end{eqnarray} 

Assume now that the right endpoints are fixed,  $r_1<\cdots<r_n$, and  the left endpoints are computed according to  (\ref{adjustleft}) with given $\alpha<\beta$. Then %(\ref{endpoints})  is true, that is, 
 the sequence of inequalities $\ell_{\pi(1)}<\cdots<\ell_{\pi(i)}<\ell_{\pi(i+1)}<\cdots <\ell_{\pi(n)}$ are satisfied if and only if $\ell_{\pi(i)}<\ell_{\pi(i+1)}$ holds for every $i$, $1\leq i<n$,  such that  $I_{\pi(i)}$ and $I_{\pi(i+1)}$ have distinct colors.
Let $x=\pi(i)$, $y=\pi(i+1)$, and assume that $I_x$ and $I_y$ have different colors.

 If  $x\in T$, $y\in S$ %be a pair such that   let $x=\pi(i)$, $y=\pi(i+1)$, and 
 then 
 $r_x-\beta =\ell_x < \ell_y=r_y-\alpha$ is satisfied if and only if
 \begin{equation}
 \label{lower}
r_x-r_y<\beta-\alpha .
\end{equation}

 The interval $[r_y,r_x]$ is called the {\it slack interval} of the pair $I_x, I_y$ $x\in T, y\in S$, and its length, $r_x- r_y$ defines  a {\it TS-slack}.   For fixed $r_1,\ldots,r_n$, each TS-slack is a lower bound on $\beta-\alpha$ as stated by (\ref{lower}).
  
 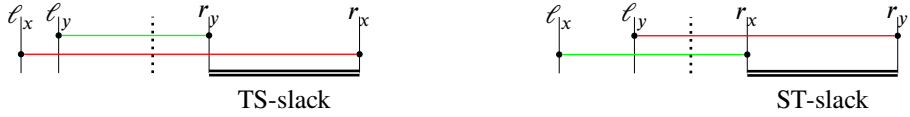
\begin{figure}[htp]
\begin{center}
\begin{tikzpicture}[scale=.5]
\foreach \i in {-2,-1,3,7}{
    \draw[line width=.02em](\i,1.5)--(\i,3);}
 
\node[A,label=left:\tiny$$](L1) at (-1,2.5) {};%y
\node[A,label=left:$$](R1) at (3,2.5) {};
\draw[line width=.05em,green](L1)--(R1);
\node[]() at (-1,3) {$\ell_y$};
\node[]() at (3,3) {$r_y$};

\node[A,label=left:\tiny$$](L2) at (-2,2) {};%x
\node[A,label=left:$$](R2) at (7,2) {};
\draw[line width=.05em,red](L2)--(R2);
\node[]() at (-2,3) {$\ell_x$};
\node[]() at (7,3) {$r_x$};
  \draw[line width=.1em,double](7,1.5)--(3,1.5) ;%TS
\node() at (5,.8) {TS-slack};
       \draw[line width=.1em,dotted](1.5,1.5)--(1.5,3);
\end{tikzpicture}
\hskip2cm
\begin{tikzpicture}[scale=.5]
\foreach \i in {-1,1,4,8}{
    \draw[line width=.02em](\i,2)--(\i,3.5);}

\node[A,label=left:\tiny$$](L1) at (1,3) {};%y
\node[A,label=left:$$](R1) at (8,3) {};
\draw[line width=.05em,red](L1)--(R1);
\node[]() at (1,3.5) {$\ell_y$};
\node[]() at (8,3.5) {$r_y$};

\node[A,label=left:\tiny$$](L2) at (-1,2.5) {};%x
\node[A,label=left:$$](R2) at (4,2.5) {};
\draw[line width=.05em,green](L2)--(R2);
\node[]() at (4,3.5) {$r_x$};
\node[]() at (-1,3.5) {$\ell_x$};

  \draw[line width=.1em,double](8,2)--(4,2) ;%ST
\node() at (6,1.3) {ST-slack};
            \draw[line width=.1em,dotted](2.5,2)--(2.5,3.5);
\end{tikzpicture}
\end{center}
\caption{Switching colors between intervals with consecutive left endpoints}
\label{slacks}
\end{figure}
If $x\in S$, $y\in T$ (with $x=\pi(i), y=\pi(i+1)$), then $I_y\not\subseteq I_x$ implies $r_x<r_y$. The interval  $[r_x,r_y]$ is called the slack interval of the pair $I_x, I_y$, $x\in S, y\in T$, and its length,  $r_y-r_x$, defines an {\it ST-stack}.
Since  $\ell_x<\ell_y$  if and only if  $r_x-\alpha<r_y-\beta$, equivalently:
\begin{equation}
\label{upper}
\beta-\alpha<r_y-r_x .
   \end{equation}
   For fixed $r_1,\ldots,r_n$, each ST-slack is an upper bound on $\beta-\alpha$ as stated by (\ref{upper}).
  
  \begin{lemma}\label{slackends}
    The left endpoint of a slack interval is the right endpoint of a green  interval, and the right endpoint of a slack interval is the right endpoint of a red  interval. 
    \end{lemma}
    \begin{proof}Figure \ref{slacks} shows that the claim is true in both cases, by definition of slack.   
    \end{proof}

   For an integral interval representation ${\mathcal{R}}$ of a permutation $\pi$ denote by $\maxrts$ the maximum of the TS-slack values and let $\minrst$ be the minimum of the ST-slack values. 
If   $\mathcal{R}$ is a  2-count interval representation,  then the inequalities (a) -- (d) imply
\begin{equation} \label{key}
\maxrts<\minrst,
\end{equation} 
We state and verify the converse in the next proposition.

\begin{proposition} 
\label{observ}
Let $\mathcal{R}$ be an integral interval representation of $\pi$, %satisfying property (a), 
and let $[S,T]$ be a sorted $2$-coloring of $\pi$. 
If  \begin{equation*} %\label{key}
\maxrts<\minrst,
\end{equation*} 
then a feasible solution of the system (a) - (d) is obtained by choosing a value $\beta - \alpha$ such that $\maxrts< \beta-\alpha <\minrst$;  furthermore,  by selecting a value $\alpha > r_{\pi(n)}-r_{1}$
if  $\pi(n)\in S$  or a value $\beta> r_{\pi(n)}-r_{1}$ if $\pi(n)\in T$. 
\end{proposition}
\begin{proof} To obtain a feasible  solution with lengths $\alpha$ and $\beta$ we modify $\mathcal R$ by keeping the right endpoints unchanged and by redefining the left endpoints using (\ref{adjustleft}). Let $\mathcal {R}^\prime$ = $[\ell^\prime_j,r^\prime_j]$, $j=1,\ldots,n$,
where $\ell^\prime_x=r_x- \alpha $ \ or $\ell^\prime_x=r_x-\beta$ \ and $r^\prime_x=r_x$. 
 We verify that  $\ell_{\pi(1)} < \cdots < \ell_{\pi(n)} $
 does not change in $\mathcal R^\prime$, that is $\ell^\prime_{\pi(i)}<\ell^\prime_{\pi(i+1)}$ if and only if $\ell_{\pi(i)}<\ell_{\pi(i+1)}$, for $i=1,\ldots,n-1$.
 
 Let $x=\pi(i), y=\pi(i+1)$, for some $1\leq i<n$. If $x,y\in S$ or if $x,y\in T$, then
 $\ell^\prime_x=r_x-\delta$ and $\ell^\prime_y=r_y-\delta$ with $\delta\in\{\alpha,\beta\}$;
 therefore, $\ell^\prime_x<\ell^\prime_y$ if and only if $r_x<r_y$.

Suppose  that $x\in T, y\in S$. Then $\ell^\prime_x<\ell^\prime_y$ if and only if $r_x-r_y<\beta -\alpha$. This last condition follows from the TS-slack inequality if  $r_x> r_y$, 
otherwise, it follows, since $r_x-r_y<0< \beta-\alpha$. 

Suppose that $x\in S, y\in T$. Then $\ell^\prime_x<\ell^\prime_y$ if and only if 
$\beta -\alpha<r_y-r_x$. Observe first that $r_y> r_x$, since $[S,T]$ is a sorted coloring of $\pi$
implying that $I_y\not\subset I_x$. Therefore, $\beta -\alpha<r_y-r_x$ holds due to the ST-slack inequality.

Thus the first and second half of (a), the inequalities  $\ell^\prime_{\pi(1)} < \cdots < \ell^\prime_{\pi(n)}$ and $r_1 < \cdots < r_n$ are satisfied. 
Finally, $\ell^\prime_{\pi(n)}<r_1$ follows from the lower bound set on either $\alpha$ or $\beta$.
 \end{proof}
 Note that $\alpha, \beta$ can be selected to be rational numbers, thus applying appropriate scaling, one obtains integral feasible solutions by Proposition \ref{observ}.\\
 
We proved in Proposition \ref{observ} that the key inequality (\ref{key}) leads to a feasible solution for the system (a) - (d).
Assume now that  
   (\ref{key}) is not satisfied by a integral interval representation of $\pi$.
Our strategy in this second part of the proof of the theorem 
consists of changing the position of the right endpoints  (by maintaining their order), and reach a representation 
  $\mathcal{R}^*$ of $\pi$ that satisfies  
  \begin{equation} \label{keyprime}
\maxrsts<\minrsst.
\end{equation}
Then a feasible solution is obtained by applying Proposition \ref{observ} with  ${\mathcal {R}}^*$ in the role of  ${\mathcal {R}}$. 
Therefore, our goal is to make all TS-slack values smaller than all ST-slack values.

\begin{lemma}
\label{lemma}
Let $J_1, J_2$ be distinct slack intervals in $\mathcal{R}$. 
\begin{itemize}
\item[(i)] If $J_1\subset  J_2$, then $J_1$ is a TS-slack interval, $J_2$ is an ST-slack interval, furthermore, they have a common left or a common right endpoint.
\item[ (ii)]  If $J_1\not\subset  J_2$, then $J_1$ and $J_2$ do not share a common endpoint.
\end{itemize}
\end{lemma}
\begin{proof} (i) %Assume that $J_1\subset J_2$. Suppose first that
 Let $J_1=[r_{x_1},r_{y_1}]$,  $J_2=[r_{x_2},r_{y_2}]$ be ST-slack intervals, where $x_1,x_2\in S$ and $y_1,y_2\in T$. Without loss of generality, let $\ell_{x_1}<\ell_{y_1}<\ell_{x_2}<\ell_{y_2}$ (see Figure \ref{abra3}). 

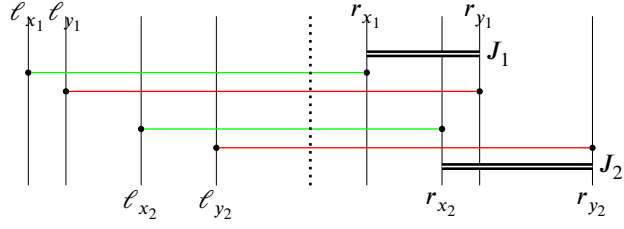
\begin{figure}[htp]
\begin{center}
\begin{tikzpicture}[scale=.5]
\foreach \i in {-5,-4,-2,0,4,6,7,10}{
    \draw[line width=.02em](\i,1.5)--(\i,6);}

\node[A,label=left:\tiny$$](L1) at (-4,4) {};%y1
\node[A,label=left:$$](R1) at (7,4) {};
\draw[line width=.05em,red](L1)--(R1);
\node[]() at (-4,6) {$\ell_{y_1}$};
\node[]() at (7,6) {$r_{y_1}$};

\node[A,label=left:\tiny$$](L2) at (-5,4.5) {};%x1
\node[A,label=left:$$](R2) at (4,4.5) {};
\draw[line width=.05em,green](L2)--(R2);
\node[]() at (-5,6) {$\ell_{x_1}$};
\node[]() at (4,6) {$r_{x_1}$};

  \draw[line width=.1em,double](4,5)--(7,5) ;%TS
\node() at (7.5,5) {$J_1$};

\node[A,label=left:\tiny$$](L1) at (0,2.5) {};%y2
\node[A,label=left:$$](R1) at (10,2.5) {};
\draw[line width=.05em,red](L1)--(R1);
\node[]() at (0,1) {$\ell_{y_2}$};
\node[]() at (10,1) {$r_{y_2}$};

\node[A](L2) at (-2,3) {};%x2
\node[A,label=left:$$](R2) at (6,3) {};
\draw[line width=.05em,green](L2)--(R2);
\node[]() at (-2,1) {$\ell_{x_2}$};
\node[]() at (6,1) {$r_{x_2}$};
  \draw[line width=.1em,double](6,2)--(10,2) ;%TS
\node() at (10.5,2) {$J_2$};
            \draw[line width=.1em,dotted](2.5,1.5)--(2.5,6);
\end{tikzpicture}
\end{center}
\caption{$J_1, J_2$ are not both ST-slack intervals}
\label{abra3}
\end{figure}
Since $I_{x_1}\not\subset  I_{x_2}$, we have $r_{x_1}<r_{x_2} $  that implies $r_{x_1}\in J_1\setminus J_2$ contradicting
$J_1\subset J_2$. Therefore, %if  $J_1\subset J_2$ then 
$J_1, J_2$ are not both ST-slack intervals.\\

Let  $J_2=[r_{y_2},r_{x_2}]$ be a TS-slack interval, where $x_2\in T, y_2\in S$. 
The condition  $J_1\subset J_2$  implies $r_{y_2}<r_{y_1}, r_{x_1}<r_{x_2}$
 (see Figure \ref{abra4} for the case $x_1\in T, y_1\in S$).
 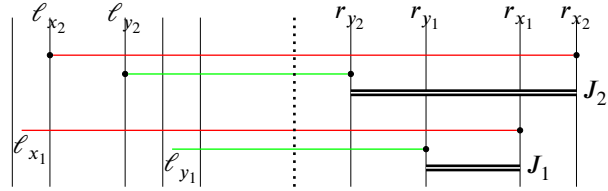
\begin{figure}[htp]
\begin{center}
\begin{tikzpicture}[scale=.5]
\foreach \i in{-5,-4,-2,-1,0,4,6,8.5,10}{
    \draw[line width=.02em](\i,1.5)--(\i,6);}
 
\node[A,label=left:\tiny$$](L1) at (-4,5) {};%x2
\node[A,label=left:$$](R1) at (10,5) {};
\draw[line width=.05em,red](L1)--(R1);
\node[]() at (-4,6) {$\ell_{x_2}$};
\node[]() at (4,6) {$r_{y_2}$};

\node[A,label=left:\tiny$$](L2) at (-2,4.5) {};%x2%1
\node[A,label=left:$$](R2) at (4,4.5) {};
\draw[line width=.05em,green](L2)--(R2);
\node[]() at (-2,6) {$\ell_{y_2}$};
\node[]() at (10,6) {$r_{x_2}$};

  \draw[line width=.1em,double](10,4)--(4,4) ;%TS
\node() at (10.5,4) {$J_2$};

\node[label=left:\tiny$$](L1) at (-5,3) {};%x1
\node[A](R1) at (8.5,3) {};
\draw[line width=.05em,red](L1)--(R1);
\node[]() at (-4.5,2.5) {$\ell_{x_1}$};
\node[]() at (6,6) {$r_{y_1}$};

\node[](L2) at (-1,2.5) {};%y1
\node[A](R2) at (6,2.5) {};
\draw[line width=.05em,green](L2)--(R2);
\node[]() at (-0.5,2) {$\ell_{y_1}$};
\node[]() at (8.5,6) {$r_{x_1}$};

 \draw[line width=.1em,double](6,2)--(8.5,2) ;%TST
\node() at (9,2) {$J_1$};
           \draw[line width=.1em,dotted](2.5,1.5)--(2.5,6);
\end{tikzpicture}
\end{center}
\caption{$J_2$  is not a TS-slack interval}
\label{abra4}
\end{figure}

Because $I_{x_1}\not\subset  I_{x_2}$ and $I_{y_2}\not\subset  I_{y_1}$, we have $\ell_{x_1}<\ell_{x_2}<\ell_{y_2}<\ell_{y_1}$. 
Then  $\{\ell_{x_1},\ell_{y_1}\}=\{\pi(i),\pi(i+1)\}$ is not true for any $1\leq i\leq n$, a contradiction. We conclude that $J_2$ is an ST-slack interval. As we saw earlier, one of $J_1$ and $J_2$ is a TS-slack, therefore, $J_1$ is a TS-slack interval. Let $J_1=[r_{y_1},r_{x_1}]$ and  $J_2=[r_{x_2},r_{y_2}]$. We claim that either $x_1=y_2$ or $y_1=x_2$.
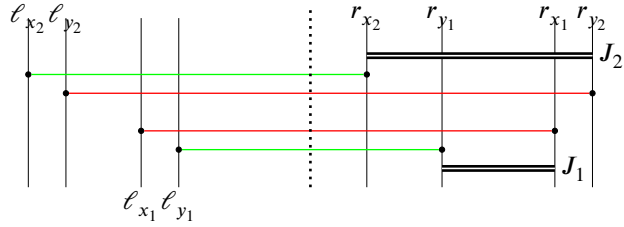
\begin{figure}[htp]
\begin{center}
\begin{tikzpicture}[scale=.5]
\foreach \i in {-5,-4,-2,-1,4,6,9,10}{
    \draw[line width=.02em](\i,1.5)--(\i,6);}
    
\node[A,label=left:\tiny$$](L1) at (-4,4) {};%y2
\node[A](R1) at (10,4) {};
\draw[line width=.05em,red](L1)--(R1);
\node[]() at (-4,6) {$\ell_{y_2}$};
\node[]() at (10,6) {$r_{y_2}$};

\node[A,label=left:\tiny$$](L2) at (-5,4.5) {};%x2
\node[A,label=left:$$](R2) at (4,4.5) {};
\draw[line width=.05em,green](L2)--(R2);
\node[]() at (-5,6) {$\ell_{x_2}$};
\node[]() at (4,6) {$r_{x_2}$};

 \draw[line width=.1em,double](10,5)--(4,5) ;%ST
\node() at (10.5,5) {$J_2$};

\node[A,label=left:\tiny$$](L1) at (-2,3) {};%y1
\node[A](R1) at (9,3) {};
\draw[line width=.05em,red](L1)--(R1);
\node[]() at (-2,1) {$\ell_{x_1}$};
\node[]() at (9,6) {$r_{x_1}$};

\node[A](L2) at (-1,2.5) {};%x1
\node[A](R2) at (6,2.5) {};
\draw[line width=.05em,green](L2)--(R2);
\node[]() at (-1,1) {$\ell_{y_1}$};
\node[]() at (6,6) {$r_{y_1}$};

  \draw[line width=.1em,double](6,2)--(9,2) ;%TST
\node() at (9.5,2) {$J_1$};
        \draw[line width=.1em,dotted](2.5,1.5)--(2.5,6.25);
\end{tikzpicture}
\end{center}
\caption{$I_{x_1}\neq I_{y_2}$ is impossible}
\label{abra5}
\end{figure}

Figure \ref{abra5} shows the case where $\ell_{x_2}<\ell_{y_2}\leq\ell_{x_1}<\ell_{y_1}$. If 
$I_{x_1}\neq I_{y_2}$, then $I_{x_1}\subset I_{y_2}$, a contradiction. Therefore,
$I_{x_1}= I_{y_2}$ and thus 
the right endpoints of $J_1$ and $J_2$ are identical. In the case of the order
$\ell_{x_1}<\ell_{y_1}\leq\ell_{x_2}<\ell_{y_2}$ we obtain $I_{y_1}=I_{x_2}$ similar way 
and 
the  left endpoints of $J_1$ and $J_2$ coincide (see Figure \ref{abra}). 
\begin{figure}[htp]
\begin{center}
\begin{tikzpicture}[scale=.5]
\foreach \i in {-4,-2,-1,4,6,9}{
    \draw[line width=.02em](\i,1.5)--(\i,6);}
    
\node[A,label=left:\tiny$$](L1) at (-1,3) {};%y2
\node[A](R1) at (9,3) {};

\draw[line width=.05em,red](L1)--(R1);
\node[]() at (-4,6) {$\ell_{x_1}$};
\node[]() at (-2,6) {$\ell_{y_1}$};

 \draw[line width=.1em,double](4,5)--(6,5) ;%TS
\node() at (6.5,5) {$J_1$};

\node[A,label=left:\tiny$$](L1) at (-2,3.5) {};%y1
\node[A](R1) at (4,3.5) {};

\draw[line width=.05em,green](L1)--(R1);
\node[]() at (-2,1) {$\ell_{x_2}$};
\node[]() at (9,1) {$r_{y_2}$};

\node[A](L2) at (-4,4) {};%x1
\node[A](R2) at (6,4) {};

\draw[line width=.05em,red](L2)--(R2);
\node[]() at (-1,1) {$\ell_{y_2}$};
\node[]() at (6,6) {$r_{x_1}$};
\node[]() at (4,1) {$r_{x_2}$};
\node[]() at (4,6) {$r_{y_1}$};
  \draw[line width=.1em,double](4,2)--(9,2) ;%ST
\node() at (9.5,2) {$J_2$};
        \draw[line width=.1em,dotted](2.5,1.5)--(2.5,6.25);
\end{tikzpicture}
\end{center}
\caption{$I_{x_2}=I_{y_1}$}
\label{abra}
\end{figure}
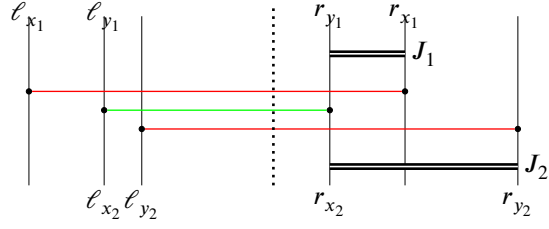

(ii) Assume  that  $J_1\not\subset J_2$. 
By Lemma \ref{slackends}, the left endpoint of a slack interval is the right endpoint of a green interval, and the right endpoint of a slack interval is the right endpoint of a red interval. Therefore, if endpoints of two slack intervals coincide, then the right endpoints of two distinct intervals of the representation would also coincide, which is not allowed in a representation. 
\end{proof}

Let $\mathcal{A}$ be the family of all TS- and ST-slack intervals in $\mathcal{R}$, and  
set
$\lambda_{\mathcal{R}}(J)$ for the slack value corresponding to $J\in\mathcal{A}$. 
Let the subfamily of maximal slack intervals be  
$${\mathcal{A}}_{max} = {\mathcal{A}}\setminus \{J\in {\mathcal{A}} : J\subsetneq H\ \text{\ for\ }\ H\in\mathcal{A}\}.$$ 
By Lemma \ref{lemma}(i), every ST-slack interval is in ${\mathcal{A}}_{max}$, furthermore, no  intervals in ${\mathcal{A}}_{max}$ contain each other. %In other words,  is a 
By a classical result due to Roberts \cite{Roberts}, there is a family of intervals $\mathcal{A}'_{max}$ with a bijection to $\mathcal{A}_{max}$, which preserves the order of the endpoints, and all intervals in $\mathcal{A}'_{max}$ are of the same length.
Consider the representation $\mathcal{R}^\prime$ of $\pi$ obtained in this way from $\mathcal{R}$, that is,  keep  left endpoints unchanged and relocate the right endpoints to the appropriate positions $r_1^\prime<\cdots < r_n^\prime$. Let ${\mathcal{A}}^\prime$ be the family of all slack intervals in 
${\mathcal{R}}^\prime$, and let $J^\prime$ be the slack interval corresponding to $J\in {\mathcal{A}}$. 
Then we  have
$\lambda_{\mathcal{R}^\prime}(J^\prime)=\lambda_{\mathcal{R}^\prime}(H^\prime)$ for every 
$J^\prime, H^\prime\in {\mathcal{A}}_{max}^\prime$.

By Lemma \ref{lemma}(ii), the endpoints of the slack intervals in  
${\mathcal{A}}_{max}^\prime$ are distinct. Now we modify ${\mathcal{R}}^\prime$ 
by replacing each ST-slack interval $[r_i^\prime,r_j^\prime]$ with 
$[r_i^*,r_j^*]$, where $r_i^*=r_i^\prime-\epsilon$ and $r_j^*=r_j^\prime+\epsilon$, and  $\epsilon>0$ is sufficiently small.
Denote $ {\mathcal{R}}^*$ the representation 
%$(\ell_{\pi(1)},\ldots,\ell_{\pi(1)},r_1^*,\ldots,r_n^*)$ of $\pi$ 
obtained in this way. 
Let ${\mathcal{A}}^*$ be the family of all slack intervals in 
${\mathcal{R}^*}$, let ${\mathcal{A}}^*_{max}\subset {\mathcal{A}}^*$ be the subfamily of all maximal slack intervals, and let $J^*\in {\mathcal{A}}^*$ be the slack interval corresponding to $J^\prime\in {\mathcal{A}}^\prime$. 
Then we have
 \begin{eqnarray}\label{adjustslacks}
 \lambda_{\mathcal{R}^*}(J^*) =\ \  \left\{\begin{array}{ccc}
\lambda_{\mathcal{R}^\prime}(J^\prime) +2\epsilon  & \text {\ if \ $J^*\in {\mathcal{A}}^*_{max}$ is an ST-slack interval} \\
\\
\lambda_{\mathcal{R}^\prime}(J^\prime) +\epsilon  &  \text {\ if \ $J^*\not\in {\mathcal{A}}^*_{max}$ is a TS-slack interval}  \\
\\
\lambda_{\mathcal{R}^\prime}(J^\prime)  & \text{\ if \ $J^*\in {\mathcal{A}}^*_{max}$ is a TS-slack interval} 
 \end{array}
  \right.
\end{eqnarray} 
Therefore, the representation  ${\mathcal {R}^*} $ of $\pi$ satisfies the the key inequality (\ref{keyprime}):
  \begin{equation*} 
{\rm max}_{\mathcal {R}^*} \text{TS}\leq  \lambda_{\mathcal{R}^\prime}(J^\prime) +\epsilon <
\lambda_{\mathcal{R}^\prime}(J^\prime) +2\epsilon =  {\rm min}_{\mathcal {R}^*}  \text{ST}.
\end{equation*}
This concludes the proof of the theorem.
 \end{proof}

%%%%%%%%%%%%%%%%%%%%%%%%%%%%%%%%%%%%%%%%
\section{Representation of height-3 interval orders} %%%%%%%%%%%%%%%%%%
\label{main}
The characterization of interval orders having a $k$-count representation and the complexity status of the corresponding recognition problems are still open for $k\geq 2$  in general. There are results for $k=2$ with additional restrictions \cite{BoyadzhiyskaDissertation,BGT,FMOS,JLORS}.
Here we discuss another particular case for $k=2$; in Theorem \ref{2count} we prove a characterization of height-3 posets having a 2-count interval representation.
 
While the exclusion of {$\mathbf{3}+\mathbf{1}$} subposets from interval orders completely characterizes semiorders (1-count interval orders), forbidding  {$\mathbf{4}+\mathbf{1}$} subposets from interval orders is not decisive regarding the existence of a 2-count interval representation. However, we conjecture that excluding springs and {$\mathbf{4}+\mathbf{1}$} produces $2$-count interval orders.
 
Here we prove a special case of Conjecture \ref{4plus1}, where the height
of $\Po$ is 3. In fact, in this case 
neither {$\mathbf{4}+\mathbf{1}$} nor  Springs  are subposets of $\Po$, since each has height 4. 

\begin{theorem}
\label{2count}
A height-3 interval order has a 2-count interval representation if and only if it has depth at most $2$. 
\end{theorem}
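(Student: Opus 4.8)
The plan is to reduce the statement about height-3 interval orders to Theorem~\ref{2countperm} on permutations. The necessity of depth at most $2$ is immediate, since a nested chain of three intervals forces three distinct lengths, so the entire content is the sufficiency direction: given a height-3 interval order $\Po$ of depth at most $2$, produce a 2-count representation. The key structural observation is that a height-3 interval order has a very rigid shape. Work in the canonical representation with endpoints $0,1,\dots,m-1$: every minimal element of $\Po$ is a singleton interval sitting at some point, every maximal element is a singleton, and the remaining ``middle'' elements form the intervals that genuinely have positive length and cover the interior. Because the height is exactly $3$, no interval can be nested three deep in \emph{every} representation; combined with depth at most $2$, the inclusion order of the middle intervals in any representation is a height-$2$ poset, i.e.\ has no $3$-chain of nested intervals.

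First I would isolate the sub-family $M$ of ``essential'' intervals — those that are neither forced to be leftmost singletons nor forced to be rightmost singletons — and argue that deciding the 2-count question reduces to assigning lengths within $M$, since the singleton top and bottom elements $[0,0]$ and $[m-1,m-1]$ can always be realized as short intervals and then any other minimal/maximal elements placed as short intervals at the appropriate integer points without creating new constraints. Next I would set up a permutation $\pi$ that encodes the essential intervals: order them by right endpoint to get the index set $1,\dots,n$, and read off the order of their left endpoints to define $\pi$. Condition~(\ref{endpoints}) is exactly the statement that this is a legitimate interval representation of the permutation $\pi$. The crucial claim is then that $\pi$ has depth at most $2$: a decreasing subsequence of $\pi$ of length $3$ would be a chain of three nested essential intervals, and since $\Po$ has height $3$ these three intervals would witness a forced nested $3$-chain, i.e.\ depth $3$ in $\Po$ (via Proposition~\ref{nested}, the three incomparable middle elements would induce a directed path of length $3$ in the PP-graph) — contradicting the hypothesis.

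Once $\pi$ is shown to have depth at most $2$, Theorem~\ref{2countperm} gives a 2-count interval representation of $\pi$, which is a 2-count representation of the essential part of $\Po$. The final step is to glue back the non-essential elements: the globally minimal singletons are placed as short intervals at points strictly to the left of every essential left endpoint (respecting the comparabilities among them, which for minimal elements of an interval order are dictated by which essential intervals they precede), and symmetrically for the maximal singletons on the right; one checks that these placements introduce no new nesting (a singleton is contained in essential intervals only in ways already dictated by $\Po$, never nested three deep) and no new length class, so the representation stays 2-count. I would verify that all the order relations of $\Po$ — in particular comparabilities between a bottom singleton and a middle element, and between middle elements themselves — are faithfully reproduced; this is where a short case analysis on the height-3 structure (bottom $\prec$ middle $\prec$ top, with no longer chains) does the work.

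The main obstacle I anticipate is the bookkeeping in the reduction to $\pi$: one must be careful that ``essential'' is defined so that (a) the induced permutation representation is genuine, (b) depth of $\pi$ coincides with (or is bounded by) depth of $\Po$ — this is the step that uses height exactly $3$ in an essential way, since in a taller interval order a nested pair of middle intervals need not correspond to a forced nested pair — and (c) the re-insertion of the extremal singletons is constraint-free. A secondary subtlety is that Theorem~\ref{2countperm} produces intervals with \emph{integral} endpoints after scaling, and one must ensure the scaling does not collide with the finitely many extra integer points needed for the extremal singletons; this is routine (choose the scaling large enough and interleave), but it should be mentioned. Modulo these checks, the theorem follows directly from Theorem~\ref{2countperm}.
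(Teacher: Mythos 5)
Your overall strategy coincides with the paper's: isolate a ``middle part,'' read it as a depth-$2$ permutation, apply Theorem~\ref{2countperm}, and glue the extremal intervals back on. However, as written the reduction has two genuine gaps, and one of its structural premises is false. First, it is not true that in the canonical representation every minimal and every maximal element is a singleton; only $[0,0]$ and $[m-1,m-1]$ are guaranteed to be present. The correct dichotomy is between \emph{extremal} intervals (those touching endpoint $0$ or $m-1$) and the rest. Second, and more importantly, for your ``essential'' intervals to define a permutation in the sense of~(\ref{endpoints}) you need every left endpoint to precede every right endpoint, i.e.\ the middle intervals must all share a common point. You never establish this, and it is precisely where height $3$ enters: two disjoint non-extremal intervals would, together with $[0,0]$ and $[m-1,m-1]$, produce a $4$-chain; hence the non-extremal intervals pairwise intersect, and by Helly's theorem for intervals on the line they contain a common point $\ell_0$. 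Without this step the object you feed into Theorem~\ref{2countperm} need not be a permutation representation at all.

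The third gap is your argument that the resulting permutation has depth at most $2$. You infer from a nested $3$-chain of middle intervals in the canonical representation that $P$ has depth $3$ ``via Proposition~\ref{nested},'' but that proposition concerns nestings forced in \emph{every} representation, whereas the canonical representation routinely contains accidental inclusions (e.g.\ two intervals sharing a left endpoint), which are not forced and do not yield directed paths in the PP-graph. The paper handles this with an explicit tie-breaking stage that perturbs coinciding endpoints (left endpoints shifted left, right endpoints shifted right, ordered consistently with the opposite endpoints) so that after perturbation the only inclusions remaining are the forced ones; only then does depth $2$ of $P$ imply depth $2$ of the permutation. Your proposal acknowledges this is the delicate step but offers no mechanism for it. Finally, the glue-back of extremal intervals is also more constrained than ``place short singletons at the ends'': the extremal intervals outside the middle part must be stretched to length exactly $\alpha$ (which requires choosing the scale of the middle-part solution large enough), not merely placed as additional short singletons. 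With the Helly step, the tie-breaking procedure, and the length adjustment of extremal intervals supplied, your outline becomes essentially the paper's proof.
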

\begin{proof} We prove that the condition of $P$ having depth 2 is sufficient; necessity is obvious. The proof is a procedure that starts with the canonical representation of $P$, then this representation is modified to obtain one with
two different interval lengths in three main stages. 

Let $\mathcal{C}$ be  the canonical representation of $P$.
 In a diagram canonical intervals are positioned between vertical {\it lines} labeled $0,1,2,\ldots,m-1$, where $m$ is the magnitude
of $P$ representing the endpoints. In  $\mathcal{C}$ 
each line contains several coinciding endpoints. The first stage of the procedure 
consists of  `breaking ties' leading to another representation of $P$, where there are no coinciding interval endpoints of the same type.

It turns out that the bulk of this new representation of $P$ consists of intervals forming an interval representation of some permutation of depth-2. In the second stage of the procedure, we apply Theorem \ref{2countperm} leading to a 2-count representation of the `middle part' with interval lengths $\alpha<\beta$, where $\alpha$ is sufficiently large.
 
A canonical interval with an endpoint at $0$ or $m-1$  is called {\it extremal}.
Note that every line in a canonical representation contains a left endpoint and a right endpoint, thus $\mathcal{C}$ contains  $[0,0]$, $[m-1,m-1]$  (see \cite{Greenough}[Corollary 2.5]). Because $\Po$ has no chain containing more than 3 elements, the non-extremal intervals are pairwise intersecting. Therefore, by Helly's theorem \cite{Helly}, non-extremal intervals contain a common line $\ell_0$. All those intervals containing $\ell_0$ form the {\it middle part}. Note that the middle part may contain extremal intervals (see an example in Figure \ref{abra9}).

\begin{figure}[H]
\begin{center}
\begin{tikzpicture}[scale=.6]
\foreach \i in {6,...,28}{
    \draw[line width=.02em](.5*\i,1.5)--(.5*\i,11);
}
\draw[line width=.1em,dotted](8.75,1.5)--(8.75,11);
\node[]()at(8.75,11.15){$\ell_0$};
\node[]()at(3,11.15){$0$};
\node[]()at(14,11.15){$m-1$};
%\node[]()at(10.5,11){$i$};

\node[A,label=left:\tiny$$](L10) at (3,10.5) {};%-1
\node[A](R10) at (9,10.5) {};
\draw[line width=.02em,](L10)--(R10);

\node[A,label=left:\tiny$$](L11) at (3,10) {};%-1
\node[A](R11) at (9.5,10) {};
\draw[line width=.02em,](L11)--(R11);

\node[A](L10) at (3,9.5) {};%t1
\node[A,label=right:$$](R10) at (10.5,9.5) {};
\draw[line width=.02em](L10)--(R10);

\node[A](L20) at (4.5,7.5) {};%t2
\node[A](R20) at (12,7.5) {};
\draw[line width=.02em](L20)--(R20);

\node[A](L4) at (5,7) {};%5
\node[A,label=right:$$](R4) at (10.5,7) {};
\draw[line width=.02em,](L4)--(R4);

\node[A](L5) at (5.5,6.5) {};%6
\node[A,label=right:$$](R5) at (10.5,6.5) {};
\draw[line width=.02em,](L5)--(R5);

\node[A,label=left:\tiny$$](L7) at (6,6) {};%7
\node[A](R7) at (11,6) {};
\draw[line width=.02em,](L7)--(R7);

\node[A,label=left:\tiny$$](L8) at (6.5,5.5) {};%8
\node[A](R8) at (11.5,5.5) {};
\draw[line width=.02em,](L8)--(R8);

\node[A,label=left:\tiny$$](L30) at (6.5,5) {};%t3
\node[A](R30) at (12.5,5) {};%
\draw[line width=.02em](L30)--(R30);

\node[A,label=left:\tiny$$](L8) at (7,4.5) {};%9
\node[A,label=left:$$](R8) at (12,4.5) {};
\draw[line width=.02em,](L8)--(R8);

\node[A,label=left:\tiny$$](L2) at (3.5,9) {};%2
\node[A](R2) at (12,9) {};
\draw[line width=.02em,](L2)--(R2);

\node[A,label=left:\tiny$$](L3) at (4,8.5) {};%3
\node[A](R3) at (10,8.5) {};
\draw[line width=.02em,](L3)--(R3);

\node[A](L4) at (4,8) {};%4
\node[A,label=right:$$](R4) at (10.5,8) {};
\draw[line width=.02em,](L4)--(R4);

\node[A,label=left:\tiny$$](Lx) at (7.5,4) {};%10
\node[A,label=left:$$](Rx) at (12,4) {};
\draw[line width=.02em,](Lx)--(Rx);

\node[A,label=left:\tiny$$](L40) at (7.5,3.5) {};%t4
\node[A,label=left:$$](R40) at (13.5,3.5) {};
\draw[line width=.02em](L40)--(R40);

\node[A,label=left:\tiny$$](Ly) at (8,3) {};%11
\node[A,label=left:$$](Ry) at (13,3) {};
\draw[line width=.02em,,](Ly)--(Ry);

\node[A,label=left:\tiny$$](Lz) at (8.5,2.5) {};%12
\node[A,label=left:$$](Rz) at (14,2.5) {};
\draw[line width=.02em,,](Lz)--(Rz);

\foreach \i in {0,...,5}{
    \draw[line width=.02em](3,8.5-.5*\i)--(3+.5*\i,8.5-.5*\i);}
\foreach \i in {0,...,2}{
    \draw[line width=.02em](3,5.5-\i)--(6+.5*\i,5.5-\i);}
     \draw[line width=.02em](3,3)--(7.5,3);
 \draw[line width=.02em](3,2.5)--(8,2.5);
  \draw[line width=.02em](3,2)--(8.5,2);
  
    \draw[line width=.02em](9,2)--(14,2);
    \draw[line width=.02em](13.5,3)--(14,3);   

\node[A]() at(3,9){};    \node[A]() at(14,3.5){};  

   \draw[line width=.02em](13,4)--(14,4);   
   \draw[line width=.02em](12.5,4.5)--(14,4.5);   
      \draw[line width=.02em](12,5.5)--(14,5.5);   
      \draw[line width=.02em](11.5,6.5)--(14,6.5);  
        \draw[line width=.02em](11,7)--(14,7);   
       \draw[line width=.02em](10.5,8.5)--(14,8.5);  
      %\node[A,label=right:$$]()at  (14,8.5){};
       %\node[A]()at  (10.5,8.5){};
    \draw[line width=.02em](10,10)--(14,10);   
    \draw[line width=.02em](9.5,10.5)--(14,10.5);                         
\end{tikzpicture}
\end{center}
 \caption{A middle part of an interval order of depth-$2$}
\label{abra9}
\end{figure}
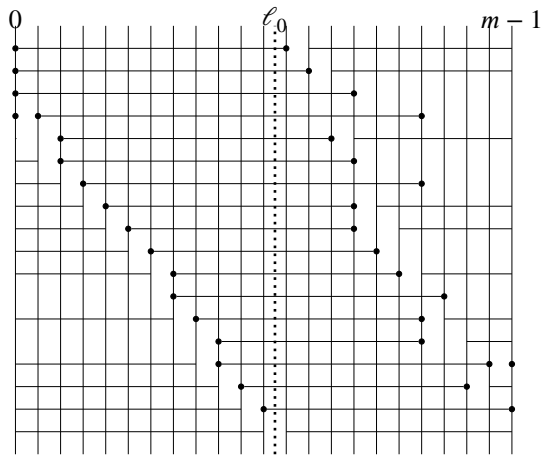
 The second stage produces a representation of $P$ that has depth-2 such that
intervals in the middle part have length $\alpha$ or $\beta$, and the length of further extremal intervals is less than $\alpha$.
  In the third stage the endpoints  of these `short' extremal intervals are moved  to obtain each length equal to $\alpha$.\\ 

%%%%%%%%%%%%%%%%%%%%%%%%%%%%%%%
\noindent\emph{1. Breaking ties.}
We may assume that $P$ has no twins\footnote{\ Elements of a poset having the same up-sets and the same down-sets are called twins.}. Due to this assumption, given two intervals,  either their right endpoints or their left endpoints are distinct.

 If there is a non-extremal singleton, then it is the only one at $\ell_0$. In this case, we replace the singleton with an  interval $I_0$ of length $\epsilon>0$ around $\ell_0$  in every  interval containing $\ell_0$. 
 Intervals with common endpoints are changed by shifting some endpoints into newly introduced lines holding the separated endpoints as follows. Left endpoints of the intervals at each line are shifted to left, right endpoints to right by eliminating inclusions.  
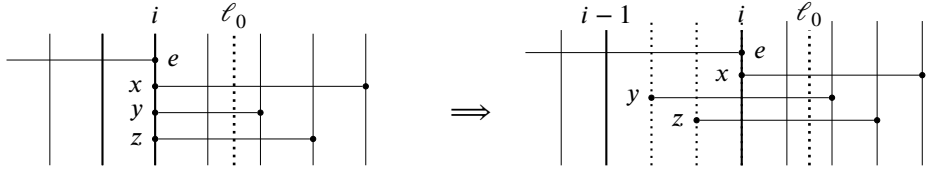
\begin{figure}[htp]
\begin{center}
\begin{tikzpicture}[scale=.7]
\foreach \i in {1,...,7}{
    \draw[line width=.02em](\i,1.5)--(\i,4);}
    
    \draw[line width=.1em,dotted](4.5,1.5)--(4.5,4);
        \node[]() at (4.5,4.35) {$\ell_0$};
        
   \draw[line width=.08em](2,1.5)--(2,4);
     \draw[line width=.08em](3,1.5)--(3,4);
         \node[]() at (3,4.35) {$i$};
         
       \node[](Lw) at (0,3.5) {};%ex
\node[A,label=right:{$e$}](Rw) at (3,3.5) {};
\draw[line width=.02em](Lw)--(Rw);
    
\node[A,label=left:$x$](L2) at (3,3) {};%x
\node[A,label=left:$$](R2) at (7,3) {};
\draw[line width=.02em](L2)--(R2);

\node[A,label=left:$y$](L1) at (3,2.5) {};%y
\node[A,label=left:$$](R1) at (5,2.5) {};
\draw[line width=.02em](L1)--(R1);

\node[A,label=left:$z$](L2) at (3,2) {};%z
\node[A,label=left:$$](R2) at (6,2) {};
\draw[line width=.02em,,](L2)--(R2);

\node[]()at (9,2.5){$\Longrightarrow$};
\end{tikzpicture}
%\hskip.5cm
\begin{tikzpicture}[scale=.6]
\foreach \i in {4,...,7}{
    \draw[line width=.02em](\i,1)--(\i,4.2);}
    
       \draw[line width=.1em,dotted](4.5,1)--(4.5,4);
        \node[]() at (4.5,4.35) {$\ell_0$};
        
    \foreach \i in {1,3,2}{
    \draw[line width=.08em,dotted](\i,1)--(\i,4.2);}
      \draw[line width=.02em](-1,1)--(-1,4);
      \node[](Rw) at (3,4.35) {$ i$}; \node[](Rw) at (0,4.35) {$ i-1$};
      
   \draw[line width=.08em](0,1)--(0,4);
     \draw[line width=.08em](3,1)--(3,4);
     
     \node[](Lw) at (-2,3.5) {};%e
\node[A,label=right:{$e$}](Rw) at (3,3.5) {};
\draw[line width=.02em](Lw)--(Rw);

\node[A,label=left:$x$](L2) at (3,3) {};%x
\node[A,label=left:$$](R2) at (7,3) {};
\draw[line width=.02em](L2)--(R2);

\node[A,label=left:$y$](L1) at (1,2.5) {};%y
\node[A,label=left:$$](R1) at (5,2.5) {};
\draw[line width=.02em](L1)--(R1);

\node[A,label=left:$z$](L2) at (2,2) {};%z
\node[A,label=left:$$](R2) at (6,2) {};
\draw[line width=.02em,,](L2)--(R2);
\end{tikzpicture}
\end{center}
\caption{Breaking ties for the left endpoints at $i$}
\label{abra12}
\end{figure}

For each  $i$, $0< i<\ell_0$, there is a  set $E$ of intervals with left endpoint at  line $i$. We  distinguish the left endpoints of all these intervals  in the same order as their right endpoints (see Figure \ref{abra12}). Because non-extremal intervals contain $\ell_0$,  their right endpoints are greater than $\ell_0$. Therefore, the extremal interval  $I_e=[0,i]$ belongs to $\mathcal C$. 
We keep the right endpoint of $I_e$ at line $i$ together with the left endpoint of the largest interval in $E$,  all other left endpoints at $i$ move left into distinct new lines subdividing the gap $[i-1,i]$.
The  intervals modified in this way still represent  $\Po$.  
In the example of the figure above 
tie breaking  is applied with $E=\{x,y,z\}$ leading to \ $i-1< \ell_y < \ell_z < \ell_x=r_{e}=i$.
By switching to the dual of $\Po$, the same shifting is applied for the coinciding right endpoints in each line $j$  ($m-1>j>\ell_0$).  \\

%%%%%%%%%%%%%%%%%%%%%%%%%%%%%%%%%%
\noindent\emph{2. Middle part.} Let $I_j=[\ell_j,r_j]_{j=1}^k$, be the  intervals (extremal, non-extremal), containing $\ell_0$. We may assume 
$$\ell_{\pi(1)}<\ell_{\pi(2)}<\cdots<\ell_{\pi(k)}  < \ell_0 < 
r_1<r_2<\cdots<r_k,$$ 
where $\pi$ is the permutation of $\{1,2,\ldots,k\}$, 
defined by the ordering of the left endpoints in the representation obtained in stage 2.

Due to the tie breaking rule applied in stage 1, and because  $\Po$ has depth 2,  there is no nested chain of more than two  intervals;  in particular, 
 $\pi$ is a depth-2 permutation. Applying 
Theorem \ref{2countperm}, we obtain an interval representation $\mathcal R$ of $\pi$ using interval lengths $\alpha < \beta$. We select the unit in the 2-count representation large enough so that $\alpha$ is larger than all interval lengths of extremal intervals not in the middle part.
 \\

%%%%%%%%%%%%%%%%%%%%%%%%%%%%%%%%%%%%%
\noindent\emph{3. Merging the  middle part with extremal intervals.}
Let O and M be the first and last line of the 2-count representation $\mathcal R$ obtained for the intervals of $P$ in the middle part. Now we include all extremal intervals not present in $\mathcal R$ (see the example below).

Because there is no containment between extremal intervals, after breaking the ties, each of these extremal intervals can be adjusted to have their length equal to $\alpha$ by extending them to left, whenever they used to contain line $0$ in $\mathcal C$, and  by extending to right, whenever they used to contain line $m-1$ in $\mathcal C$. 
\end{proof}

\begin{figure}[htp]
\begin{center}
\begin{tikzpicture}[scale=.45]

\node[X,label=right:{$$}](p1) at (-1,0) {6};%1>6
\node[X,label=left:{$$},label=right:{$$}](p3) at (2,0) {7};%3>7
\node[X,label=left:{$$},label=right:{$$} ](p7) at (4,0) {3};%7>3
\node[X,label=left:{$$},label=right:{$$} ](p9) at (6.5,0) {8};%9>8
\node[X,label=left:{$$},label=right:{$$} ](p11) at (8,0) {1};%11>1

\node[X](p4) at (-2,2.5) {5};%4>5
\node[X,label=left:{},label=right:{$$}](p6) at (-.5,2.5) {4};%6>4

\node[X](p10) at (1.5,4) {2};%10>2

\node[X,label=left:{$$},label=right:{$$} ](p12) at (5,5) {9};%12>9
\node[X](p8) at (7.5,9) {10};%8>10
\node[X](p5) at (-.5,10.5) {11};%5>11
\node[X,label=left:{$$},label=right:{$$} ](p2) at (4.65,12) {12};%2>12

\draw[line width=.07em](p4)--(p1)--(p6)(p3)--(p10)--(p1) (p4)--(p2)--(p6)(p10)--(p2)--(p7)(p9)--(p2)--(p11);
\draw[line width=.07em](p3)--(p12)--(p1)(p12)--(p9)(p7)--(p5);(p12)--(p3);
\draw[line width=.07em](p10)--(p5)--(p6) (p11)-- (p5)--(p9)(p10)-- (p8)--(p9)(p11)-- (p8);
\end{tikzpicture}\hskip1cm
\begin{tikzpicture}[scale=.8]
\foreach \i in {0,...,6}{
    \draw[line width=.02em](\i,-0.5)--(\i,4);
  \node[]() at (\i,4.27) {\tiny \i}; }

       \node[A,label=left:{  $6$}](LR1) at (0,3.5) {};%6
  
\node[A,label=left: $7$](L2) at (0,3) {};%7
\node[A](R2) at (1,3) {};
\draw[line width=.02em](L2)--(R2);

\node[A,label=left: $8$](L1) at (0,2.5) {};%8
\node[A,label=left:$$](R1) at (2,2.5) {};
\draw[line width=.02em](L1)--(R1);

\node[A,label=left: $3$](L2) at (0,2) {};%3
\node[A,label=left:$$](R2) at (4,2) {};
\draw[line width=.1em](L2)--(R2);

\node[A,label=left: $2$](L1) at (2,1.5) {};%2
\node[A,label=left:$$](R1) at (3,1.5) {};
\draw[line width=.1em](L1)--(R1);

\node[A,label=left: $4$](L2) at (1,1) {};%4
\node[A,label=left:$$](R2) at (4,1) {};
\draw[line width=.1em](L2)--(R2);

\node[A,label=left: $5$](L1) at (1,.5) {};%5
\node[A,label=left:$$](R1) at (5,.5) {};
\draw[line width=.1em](L1)--(R1);

      \node[A,label=right:{  $12$}](LR1) at (6,2) {};%12
    \node[A](L2) at (5,2.5) {};%11
\node[A,label=right: $11$](R2) at (6,2.5) {};
\draw[line width=.02em](L2)--(R2);
\node[A](L2) at (3,3.5) {};%9
\node[A,label=right: $9$](R2) at (6,3.5) {};
\draw[line width=.02em](L2)--(R2);

\node[A](L2) at (4,3) {};%10
\node[A,label=right: $10$](R2) at (6,3) {};
\draw[line width=.02em](L2)--(R2);

\node[A,label=left: $1$](L2) at (0,0) {};%1
\node[A,label=left:$$](R2) at (3,0) {};
\draw[line width=.1em](L2)--(R2);
\end{tikzpicture}
\end{center}
\caption{The Hasse diagram and canonical representation of the height-3, depth-2 interval order $P=(0,1,0,1,2,1,0,2,0,2,0,3)$ }
\label{Hasse}
\end{figure}
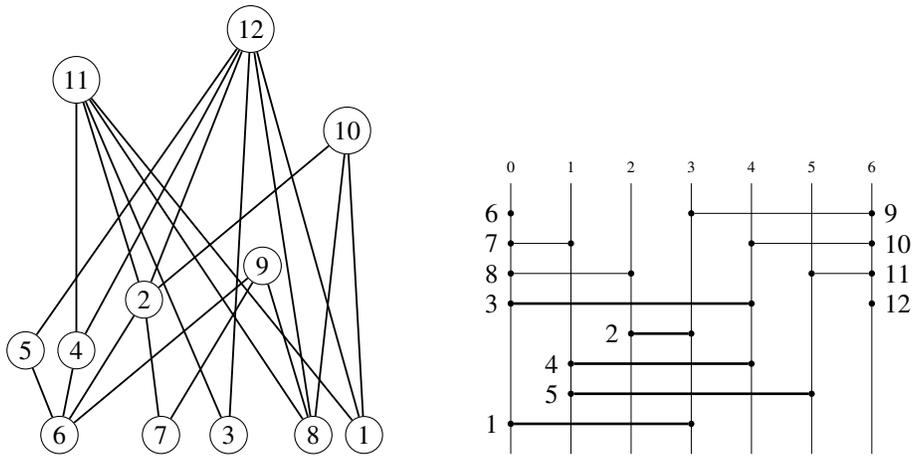

We conclude the section with an example. Consider the  depth-2, height-3 interval order $P=(0,1,0,1,2,1,0,2,0,2,0,3)$ identified here using its Ascent Sequence. (Notice that the ascent sequence labeling of the elements of $P$ is permuted for our purpose.) The Hasse diagram and canonical representation of $P$ is displayed in Figure \ref{Hasse};
the 2-count representation in Figure \ref{the2countrep} is obtained by the procedures in the proof of Theorems \ref{2count} and \ref{2countperm}.

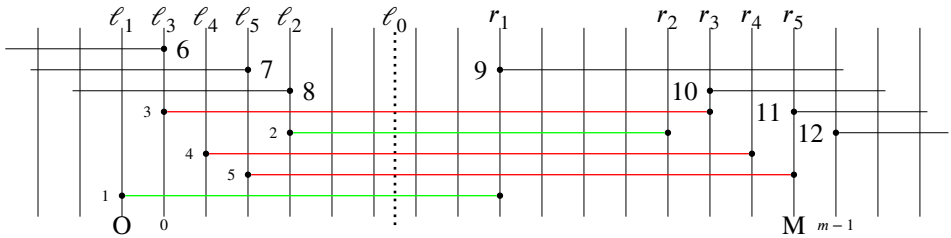
\begin{figure}[htp]
\begin{center}
\begin{tikzpicture}[scale=.5583]
\foreach \i in {-3,...,18}{
    \draw[line width=.02em](\i,-0.5)--(\i,4);}
 \node[]() at (0,-.7) {\tiny$0$};
   \node[]() at (16,-.7) {\tiny$m-1$};
      \node[]() at (-1,4.2) { $\ell_1$};   \node[]() at (0,4.2) { $\ell_3$};
         \node[]() at (1,4.2) { $\ell_4$};   \node[]() at (2,4.2) { $\ell_5$};
            \node[]() at (3,4.2) { $\ell_2$};
                  \node[]() at (8,4.2) { $r_1$};   \node[]() at (13,4.2) { $r_3$};
         \node[]() at (14,4.2) { $r_4$};   \node[]() at (15,4.2) { $r_5$};
            \node[]() at (12,4.2) { $r_2$};
    \node[]() at (-1,-.7) { O};
   \node[]() at (15,-.7) { M};
   
              \draw[line width=.1em,dotted](5.5,-.7)--(5.5,4.);
     \node[]() at (5.5,4.2) {$\ell_0$};   

             \node[A,label=right:{  $6$}](R1) at (0,3.5) {};%6
 \node[](L1) at (-4,3.5) {};
\draw[line width=.02em](L1)--(R1);
            
      \node[A,label=left:{  $12$}](L1) at (16,1.5) {};%12
  \node[](R1) at (18.9,1.5) {};
\draw[line width=.02em](L1)--(R1);

      \node[A,label=left:{  $11$}](L1) at (15,2) {};%11
  \node[](R1) at (18.4,2) {};
\draw[line width=.02em](L1)--(R1);
    
\node[](L2) at (-3.4,3) {};%7
\node[A,label=right: $7$](R2) at (2,3) {};
\draw[line width=.02em](L2)--(R2);

\node[](L1) at (-2.4,2.5) {};%8
\node[A,label=right: $8$](R1) at (3,2.5) {};
\draw[line width=.02em](L1)--(R1);
       
\node[A,label=left: \tiny$3$](L2) at (0,2) {};%3
\node[A,label=left:$$](R2) at (13,2) {};
\draw[line width=.05em,red](L2)--(R2);

\node[A,label=left:\tiny $2$](L1) at (3,1.5) {};%2
\node[A,label=left:$$](R1) at (12,1.5) {};
\draw[line width=.05em,green](L1)--(R1);

\node[A,label=left:\tiny $4$](L2) at (1,1) {};%4
\node[A,label=left:$$](R2) at (14,1) {};
\draw[line width=.05em,red](L2)--(R2);

\node[A,label=left: \tiny$5$](L1) at (2,.5) {};%5
\node[A,label=left:$$](R1) at (15,.5) {};
\draw[line width=.05em,red](L1)--(R1);

\node[A,label=left: \tiny$1$](L1) at (-1,0) {};%1
\node[A,label=left:$$](R1) at (8,0) {};
\draw[line width=.05em,green](L1)--(R1);

\node[A,label=left: $9$](L2) at (8,3) {};%9
\node[](R2) at (16.4,3) {};
\draw[line width=.02em](L2)--(R2);

\node[A,label=left: $10$](L2) at (13,2.5) {};%10
\node[](R2) at (17.4,2.5) {};
\draw[line width=.02em](L2)--(R2);
\end{tikzpicture}
\end{center}
\caption{A 2-count representation of $P=(0,1,0,1,2,1,0,2,0,2,0,3)$ with $\alpha=9$, $\beta=13$}
\label{the2countrep}
\end{figure}

The procedure in the proof of Theorem \ref{2count} starts with
the canonical representation of $P$ and 
 yields  the middle part permutation $\pi=\{1,3,4,5,2\}$ after breaking ties. 
  Then using the sorted 2-coloring $S=\{1,2\}$, $T=\{3,4,5\}$ Theorem \ref{2countperm} finds a solution with  $\alpha=9, \beta= 13$ for the middle part.  Theorem \ref{2count} completes the 2-count representation of $P$ by merging the middle part with the extremal intervals $6,7,8$ and $9,10,11,12$.

%%%%%%%%%%%%%%%%%%%%%%%%%
%%%%%%%%%%%%%%%%%%%%%%%%%%%%%%%%%%%%%%%%%
\section{Depth-3 permutation with no  $3$-count representation}
\label{example}
In the present section $\pi$ is a depth-3 permutation of $[n]$; let $\{I_j\}_{j\in[n]}$  be an interval representation of $\pi$. 
Recall that a sorted $3$-coloring $[T_1, T_2, T_3]$  of $\pi$ satisfies that 
 $T_i$ contains no inversion, for $i=1,2,3$, and 
 if $x\in T_i$, $y\in T_j$ and $I_x\subsetneq I_y$, then $i<j$.
A sorted $3$-coloring $[T_1, T_2, T_3]$ is {\it feasible}, provided  
there exist integers
$\alpha_1<\alpha_2<\alpha_3$, and an integral representation $[\ell_x,r_x]_{x=1}^n$ such that
 $r_x-\ell_x=\alpha_i$ for every $x\in T_i$.
   
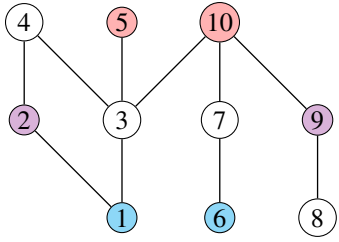
\begin{figure}[htp]   %%%%%%%%%%%%%%%%%%%%%%%%%%%%%%%%%%%%%%%
    \begin{center}
\begin{tikzpicture}[scale=.65]

\node[Xg](1) at (2, 3) {$1$};%1
\node[Xg](6) at (4,3) {$6$};%6
\node[X](8) at (6,3) {$8$};%8

\node[Xb](2) at (0,5) {$2$};%2
\node[X](3) at (2,5) {$3$};%3
\node[X](7) at (4,5) {$7$};%7
\node[Xb](9) at (6,5) {$9$};%9

\node[X](4) at (0,7) {$4$};%4
\node[Xr](5) at (2,7) {$5$};%5
\node[Xr](10) at (4,7) {10};%10

\draw[line width=.05em](4)--(2)--(1)--(3)--(5)(10)--(3);
\draw[line width=.05em](6)--(7)--(10)--(9)--(8)(4)--(3);
\end{tikzpicture}
%\vskip.5em
\end{center}
\caption{The inclusion order of the interval representation of $\pi=[4,2,5,10,3,1,7,6,9,8]$}
\label{abra7}
\end{figure}

There are several examples of depth-3 permutations with infeasible sorted 
3-colorings. This does not mean that the underlying permutation has no other feasible sorted $3$-coloring. However, any such permutation can be embedded into a larger depth-3 permutation such that this infeasible coloring is 
forced by all sorted 3-colorings of the larger permutation. %We conclude that there is no  equality between depth and interval count numbers  similar to Theorem \ref{2countperm}, for depth-3 permutations.

 Computer search shows that the smallest depth-3 permutation with no 3-count representation is $\pi=[4,2,5,10,3,1,7,6,9,8]$ shown earlier in Figure \ref{abra6}. We prove next that this permutation has no 3-count representation.\\

Assume that $\pi$ has a  feasible 3-coloring.  The inclusion poset of $\{I_x\}_{x\in[10]}$ in Figure \ref{abra7} shows that 
$5,10\in T_3, 2,9\in T_2$ and $1,6\in T_1$ in every sorted $3$-coloring 
 $[T_1,T_2,T_3]$ of $\pi$. 
This means that when $\pi$ is restricted to the permutation $\pi^\prime=[2,5,10,1,6,9]$ on the subset $\{1,2,5,6,9,10\}$, 
the induced sorted $3$-coloring is
$T_1^\prime= \{1,6\}, \quad T_2^\prime=\{2,9\}, \quad 
T_3^\prime=\{5,10\}$. If this sorted $3$-coloring is feasible, then   there is a  representation  $[\ell_i,r_i]$, $i\in\{1,2,5,6,9,10\}$ of $\pi$ such that 
\begin{eqnarray}
\label{x}\ell_6-\ell_1&=&r_6-r_1\\
\label{y}\ell_9-\ell_2&=&r_9-r_2\\
\label{z}\ell_{10}-\ell_5&=&r_{10}-r_5
\end{eqnarray}

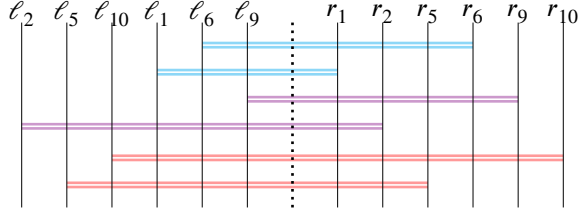
\begin{figure}[htp]
\begin{center}
\begin{tikzpicture}[scale=.6]
\draw[double,cyan!40,line width=.1em] (-4,-2.8) -- node [above] {} (0,-2.8); %1
    \draw[cyan!40,double,line width=.1em] (-3,-2.2) -- node [above] {} (3,-2.2); %5

     \draw[violet!40,double,line width=.1em] (-7,-4) -- node [above] {} (1,-4); %2    
\draw[double,line width=.1em,violet!40] (-2,-3.4) -- node [above] {} (4,-3.4); %7    
    
\draw[double,line width=.1em,red!40] (-6,-5.3) -- node [above] {} (2,-5.3);%4 
    \draw[red!40,double,line width=.1em] (-5,-4.7) -- node [above] {} (5,-4.7); %8
    
\foreach \i in {-7,...,-2}{
 \draw[line width=.02em](\i,-5.8)--(\i,-1.7);  }
 \node()at(-7,-1.5){$\ell_2$}; \node()at(-6,-1.5){$\ell_5$}; \node()at(-5,-1.5){$\ell_{10}$};
  \node()at(-4,-1.5){$\ell_1$}; \node()at(-3,-1.5){$\ell_6$}; \node()at(-2,-1.5){$\ell_9$};
 \foreach \i in {0,...,5}{
 \draw[line width=.02em](\i,-5.8)--(\i,-1.7);  }
  \node()at(0,-1.5){$r_1$}; \node()at(1,-1.5){$r_2$}; \node()at(2,-1.5){$r_5$};
  \node()at(3,-1.5){$r_6$}; \node()at(4,-1.5){$r_9$}; \node()at(5,-1.5){$r_{10}$};
  \draw[line width=.1em,dotted](-1,-5.8)--(-1,-1.7); 
\end{tikzpicture}
\end{center}
\caption{Infeasible sorted $3$-coloring}
\label{abra8}
\end{figure}
Observe that  $[\ell_5,\ell_{10}]\cap [\ell_1,\ell_6]=\varnothing$ and $[r_5,r_{10}]\cap [r_1,r_6]\neq\varnothing$; furthermore,\\
$[\ell_5,\ell_{10}]\cup [\ell_1,\ell_6]\subset [\ell_2,\ell_{9}]$ and $[r_2,r_{9}]\subset [r_5,r_{10}]\cup [r_1,r_6]$.
Then using (\ref{x}) and (\ref{z})  
\begin{eqnarray*}
\ell_9-\ell_2>(\ell_{10}-\ell_5) + (\ell_6-\ell_1)=
(r_{10}-r_5) + (r_6-r_1)>r_9-r_2 
\end{eqnarray*}
follows, contradicting (\ref{y}). Therefore,  $\pi^\prime$ and thus $\pi$ has no 3-count representation.
%%%%%%%%%%%%%%%%%%%%%%%%%%%%%%%%%%%%%%%%%%%%
\section{Conclusions}
%%%%%%%%%%%%%%%%%%%%%%%%%
\label{conclusions}
In this section, we add a few comments and highlight significant open problems.

It is worth repeating that Graham's original $k$-count problem on interval orders remains open for $k \geq 2$.  In particular,
characterizing $2$-count interval orders remains unsolved despite having received attention for special cases 
\cite{BoyadzhiyskaDissertation,BGT,FMOS,JLORS}.
Adding another interesting special case, we conjecture that a $\{\text{spring, } \mathbf{4}+\mathbf{1}\}$-free interval order is $2$-count.

During his investigations of $2$-count interval orders, 
Fishburn proved \cite[Section 9, Theorem 1]{FishburnBook} that for arbitrary $k$, there are depth-$2$ interval orders with no $k$-count interval representation. 
His recursive construction includes an {$\mathbf{r}+\mathbf{1}$} subposet (an $r$-element chain with an additional incomparable element) for increasingly larger values of $r$.
For fixed $r$, does an upper bound on the count number of an {$\mathbf{r}+\mathbf{1}$}-free interval order follow from a bound on its depth?
For example, is a depth-$2$, {$\mathbf{4}+\mathbf{1}$}-free interval order necessarily $5$-count? 

We showed (Theorem \ref{2countperm}) that depth-$2$ permutations have a $2$-count representation. Meanwhile, we found several depth-$3$
permutations that are not $3$-count.  Can one describe all depth-$3$ permutations that are minimally not $3$-count?
Is there a bound on the count number of a permutation in terms of its depth? For example, does a depth-$3$ permutation necessarily have a $5$-count interval representation? 

\bibliographystyle{amsplain}
\bibliography{MutationReferences}

\providecommand{\bysame}{\leavevmode\hbox to3em{\hrulefill}\thinspace}
\providecommand{\MR}{\relax\ifhmode\unskip\space\fi MR }
% \MRhref is called by the amsart/book/proc definition of \MR.
\providecommand{\MRhref}[2]{%
  \href{http://www.ams.org/mathscinet-getitem?mr=#1}{#2}
}
\providecommand{\href}[2]{#2}
\begin{thebibliography}{10}

\bibitem{ascent2010}
Mireille Bousquet-M\'{e}lou, Anders Claesson, Mark Dukes, and Sergey Kitaev,
  \emph{{$(2+2)$}-free posets, ascent sequences and pattern avoiding
  permutations}, J. Combin. Theory Ser. A \textbf{117} (2010), no.~7, 884--909.
  \MR{2652101}

\bibitem{BoyadzhiyskaDissertation}
Simona Boyadzhiyska, \emph{Interval orders with restrictions on the interval
  lengths}, Wellesley College, 2016, Honors Thesis--Wellesley College.

\bibitem{BGT}
Simona Boyadzhiyska, Garth Isaak, and Ann~N. Trenk, \emph{Interval orders with
  two interval lengths}, Discrete Appl. Math. \textbf{267} (2019), 52--63.
  \MR{3996382}

\bibitem{FishburnBook}
Peter~C. Fishburn, \emph{Interval orders and interval graphs},
  Wiley-Interscience Series in Discrete Mathematics, John Wiley \& Sons, Ltd.,
  Chichester, 1985, A study of partially ordered sets, A Wiley-Interscience
  Publication. \MR{776781}

\bibitem{FMOS}
Mathew~C. Francis, L\'{\i}via~S. Medeiros, Fabiano~S. Oliveira, and Jayme~L.
  Szwarcfiter, \emph{On subclasses of interval count two and on {F}ishburn's
  conjecture}, Discrete Appl. Math. \textbf{323} (2022), 236--251. \MR{4503027}

\bibitem{Greenough}
Thomas~Lockman Greenough, \emph{Representation and enumeration of interval
  orders and semiorders}, ProQuest LLC, Ann Arbor, MI, 1976, Thesis
  (Ph.D.)--Dartmouth College. \MR{2626092}

\bibitem{Helly}
E.~Helly, \emph{{\"U}ber mengen konvexer k{\"o}rper mit gemeinschaftlichen
  punkten.}, Jber. Deutsch. Math. Verein. \textbf{32} (1923), 175--176.

\bibitem{JLORS}
Felix Joos, Christian L\"{o}wenstein, Fabiano de~S. Oliveira, Dieter
  Rautenbach, and Jayme~L. Szwarcfiter, \emph{Graphs of interval count two with
  a given partition}, Inform. Process. Lett. \textbf{114} (2014), no.~10,
  542--546. \MR{3219238}

\bibitem{Mirsky}
L.~Mirsky, \emph{A dual of {D}ilworth's decomposition theorem}, Amer. Math.
  Monthly \textbf{78} (1971), 876--877. \MR{288054}

\bibitem{Roberts}
Fred~S. Roberts, \emph{Indifference graphs}, Proof {T}echniques in {G}raph
  {T}heory ({P}roc. {S}econd {A}nn {A}rbor {G}raph {T}heory {C}onf., {A}nn
  {A}rbor, {M}ich., 1968), Academic Press, New York, 1969, pp.~139--146.
  \MR{0252267}

\bibitem{ScottSuppes}
Dana Scott and Patrick Suppes, \emph{Foundational aspects of theories of
  measurement}, J. Symbolic Logic \textbf{23} (1958), 113--128. \MR{115919}

\end{thebibliography}

\end{document}